\newtheorem{thm}{Theorem}
\newtheorem{lm}{Lemma}
\newtheorem{prop}{Proposition}
\newtheorem{ex}{Example}
\theoremstyle{definition}
\newtheorem{df}{Definition}
\theoremstyle{remark}
\newtheorem{rem}{Remark}
\newcommand{\R}{\mathbb{R}}
\newcommand{\T}{\mathbb T}
\newcommand{\N}{\mathbb N}
\newcommand{\Z}{\mathbb Z}
\newcommand{\A}{\mathbb A}
\title{On the degenerate Arnold conjecture on $\mathbb T^{2m}\times \mathbb C\mathbb P^n$}
\author[L. Asselle]{Luca Asselle}
\address{Ruhr-Universit\"at Bochum, Fakult\"at f\"ur Mathematik, Universit\"atsstr. 150, 44801, Bochum, Germany}
\email{luca.asselle@rub.de}
\author[M. Starostka]{Maciej Starostka}
\address{Gda\'nks University of Technology, Gabriela Narutowicza 11/12, 80233 Gda\'nsk, Poland \newline
Institut f\"ur Mathematik, Naturwissenschaftliche Fakult\"at II, Martin-Luther-Universit\"at Halle-Wittenberg, 06099 Halle (Saale), Germany}
\email{maciejstarostka@pg.edu.pl}
\begin{document}

\maketitle

\begin{abstract}
In the 1960s Arnold conjectured that a Hamiltonian diffeomorphism of a closed connected symplectic manifold $(M,\omega)$ should have at least 
as many contractible fixed points as a smooth function on $M$ has critical points. Such a conjecture can be seen 
as a natural generalization of Poincar\'e's last geometric theorem and is one of the most famous (and still nowadays open in its full generality) problems in symplectic geometry. 
In this paper, we build on a recent approach of the authors and Izydorek to the Arnold conjecture on $\mathbb C\mathbb P^n$  to
show that the (degenerate) Arnold conjecture holds for Hamiltonian diffeomorphisms $\phi$  of $\T^{2m}\times \mathbb C\mathbb P^n$, $m,n\geq 1$, which are $C^0$-close to the identity in the $\mathbb C \mathbb P^n$-direction, namely that any such $\phi$ has at least $\text{CL}(\T^{2m}\times \mathbb C\mathbb P^n)+1= 2m+n+1$ contractible fixed points. 
\end{abstract}


\vspace{3mm}

\begin{small} 
\noindent \textbf{Data availability statement:} Data sharing not applicable to this article as no datasets were generated or analysed during the current study.
\end{small}


\section{Introduction}

Given a closed symplectic manifold $(M,\omega)$ and a smooth time dependent one-periodic Hamiltonian function $H:\T\times M\to \R, \T := \R/\Z,$ one defines the Hamiltonian flow $\phi_t$ by\footnote{Actually, $\phi_t$ is a flow only if $H$ is autonomous.}
$$\left \{ \begin{array}{l} \displaystyle \frac{\mathrm d}{\mathrm d t} \phi_t = X_H (\phi_t),\\ \ \ \ \, \phi_0 = \text{id},\end{array}\right .$$ 
where $X_H$ is the (time-dependent) Hamiltonian vector field defined by 
$$\iota_{X_H} \omega = \omega(X_H,\cdot) = - \mathrm d H.$$

A diffeomorphism $\phi$ of $M$ is called \textit{Hamiltonian} if there exists a time-dependent $H:\T\times M\to \R$ such that $\phi=\phi_1$, i.e. the time-one-Hamiltonian flow of $H$ is precisely $\phi$.
A fixed point $x\in M$ of a Hamiltonian diffeomorphism $\phi$ is called \textit{contractible} if the loop 
 $$[0,1]\ni t\mapsto \phi_t(x)$$ 
 is contractible in $M$. The (degenerate) \textit{Arnold conjecture}, formulated by Arnold in 1960s, asserts that the number of contractible\footnote{In general, one cannot expect the existence of non-contractible fix points. For instance, this is the case for Hamiltonians which are sufficiently $C^1$-small.} fix points of any Hamiltonian 
 diffeomorphism $\phi$ of $(M,\omega)$ is bounded from below 
 by the smallest positive integer $k\in \N$ such that there exists a smooth function $f:M\to \R$ with exactly $k$ critical points: 
 \begin{equation}
| \text{Fix} (\phi) | \geq \text{crit}(M):= \min\big \{ |\text{crif}(f)| \, |\, f:M\to \R \ \text{smooth}\big \}.
\label{eq:ac1}
\end{equation}

Under the additional assumption that all contractible fixed points are non-degenerate, the lower bound should be given by the smallest number of critical points of a Morse function on $M$: 
 \begin{equation}
| \text{Fix} (\phi) | \geq \text{Morse}(M):= \min\big \{ |\text{crif}(f)| \, |\, f:M\to \R \ \text{Morse function}\big \}.
\label{eq:ac1nondeg}
\end{equation}
In both formulations above the conjecture is still widely open. The first result appeared with the work of Conley and Zehnder \cite{Conley}, who proved \eqref{eq:ac1} and \eqref{eq:ac1nondeg} for the standard $2m$-dimensional torus. 
As far as \eqref{eq:ac1} is concerned, the most general answer appears in the work of Rudyak-Oprea \cite{Rudyak:99}, which confirmed the validity of \eqref{eq:ac1}
for any closed symplectically aspherical manifold $(M,\omega)$. The weaker version of \eqref{eq:ac1nondeg} known as the \textit{homological Arnold conjecture} asserting that, if all contractible fix points of $\phi$ are non-degenerate, then $|\text{Fix}(\phi)|$ is bounded below by the sum of the Betti numbers of $M$, 
has received great attention starting from the pioneering work of Floer \cite{Floer:88}, and it is now known to hold in general, see \cite{Floer:89,Fukaya:1,Fukaya:2,Liu}.

Over the past decades, many (non-equivalent) versions of \eqref{eq:ac1} have been stated. For the purposes of this introduction we only recall the following. By Lyusternik-Schnirelmann theory, $\text{crit}(M)$ is bounded from below by the Lyusternik-Schnirelmann category $LS(M)$ of $M$, that is, the smallest number of contractible open subsets of $M$ that one needs to cover $M$ completely. In general, $LS(M)$ is hard to compute, but one can bound it from below by $\text{CL}(M)+1$, that is, one plus the cup-length of $M$. In many examples, one has the equality $LS(M)=CL(M)+1$, but in general the two quantities can be arbitrarily far from each other, see \cite{CLOT}. Therefore, the degenerate Arnold conjecture \eqref{eq:ac1} is often formulated in the weaker version
 \begin{equation}
| \text{Fix}(\phi) | \geq \text{CL}(M)+1.
\label{eq:ac2}
\end{equation}
For non symplectically aspherical manifolds, Equation \eqref{eq:ac2} has been proved only in particular cases, e.g. for $\mathbb C\mathbb P^n$ endowed with the Fubini-Study form \cite{Fo}, and is in general still open even for simple manifolds as the product $\T^{2m}\times \mathbb C\mathbb P^n$, $m,n\geq 1$, with its standard symplectic form. In this case, the best known lower bound is due to Oh \cite{Oh} and states that 
$| \text{Fix}_{\text{contr}} (\phi) |$ is bounded from below by $n+1=\text{CL}(\mathbb C\mathbb P^n)+1$. In \cite{Oh}, the author also raises doubts on the validity of the degenerate Arnold conjecture (in any of its form) in full generality. Going in the opposite direction, in the present paper we build on a recent, purely based on Conley theory (suitably revisited), approach to the Arnold conjecture on $\mathbb C\mathbb P^n$ introduced by the authors and Izydorek in \cite{AIS} to prove \eqref{eq:ac2} for Hamiltonian diffeomorphisms on $\mathbb T^{2m}\times \mathbb C\mathbb P^n$ generated by $C^0$-small Hamiltonians $H:\T \times \mathbb T^{2m}\times \mathbb C\mathbb P^n\to \R$ which are $C^1$-small in the $\mathbb C\mathbb P^n$-direction. 

\begin{thm}
\label{thm:a}
On the closed symplectic manifold $( \mathbb T^{2m}\times \mathbb C\mathbb P^n, \omega_{\mathrm{std}}\oplus \omega_{FS})$ consider a Hamiltonian function $H_0:\T \times \mathbb T^{2m}\times \mathbb C\mathbb P^n\to \R$
such that $\max \{2\|H_0\|_\infty,\|\nabla_{\mathbb C\mathbb P^n} H_0\|_\infty\} < 1/2$, and denote by $\phi$ the corresponding Hamiltonian diffeomorphism. Then 
$$| \mathrm{Fix} (\phi) | \geq 2m+n+1 = \mathrm{CL}(\T^{2m}\times \mathbb C\mathbb P^n)+1.$$
\end{thm}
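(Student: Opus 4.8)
The plan is to realize contractible fixed points of $\phi$ as critical points of an action functional on a suitable loop space, and then to extract the lower bound from the Lyusternik--Schnirelmann category via a cup-length estimate, exactly in the spirit of the Conley-theoretic approach of \cite{AIS}. Concretely, one writes a loop $z(t)=(q(t),w(t))\in \T^{2m}\times \mathbb{C}\mathbb{P}^n$, lifts it to the universal cover $\R^{2m}\times S^{2n+1}$ (or works with the $S^1$-equivariant Hopf picture on the $\mathbb{C}\mathbb{P}^n$ factor as in \cite{Fo, AIS}), and considers the Hamiltonian action $\mathcal{A}_{H_0}(z)=\int_0^1 \big(\lambda(\dot z)-H_0(t,z)\big)\,\mathrm dt$. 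The key structural point is that, because $\mathbb{C}\mathbb{P}^n$ is \emph{monotone} with minimal Chern number $n+1$ and $\T^{2m}$ is aspherical, the contractible loop space splits and the action functional, after finite-dimensional approximation (Conley--Zehnder/Galois reduction onto a finite-dimensional manifold of ``broken geodesics'' or Fourier truncation), becomes a function on a closed manifold homotopy equivalent to $\T^{2m}\times \mathbb{C}\mathbb{P}^n$ itself (this is where the $C^0$/$C^1$-smallness hypothesis $\max\{2\|H_0\|_\infty, \|\nabla_{\mathbb{C}\mathbb{P}^n}H_0\|_\infty\}<1/2$ enters: it guarantees that the reduction is valid and that no ``extra'' critical points coming from the sphere bundle or from multiply-covered orbits interfere — the smallness threshold $1/2$ being dictated by the area $\pi$-normalization of $\omega_{FS}$ so that the relevant capacity/energy window is respected).

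First I would set up the functional-analytic framework: take $\Lambda$ the space of contractible loops of Sobolev class $H^{1/2}$ (or the space of finite Fourier polynomials of degree $\le N$ after truncation), decompose $\Lambda \simeq \Lambda_{\T^{2m}}\times \Lambda_{\mathbb{C}\mathbb{P}^n}$ up to homotopy, and observe that $\Lambda_{\T^{2m}}$ retracts onto the constant loops $\T^{2m}$ since the torus is aspherical, while on the $\mathbb{C}\mathbb{P}^n$ factor one uses the $S^1$-Borel/Conley construction from \cite{AIS} to replace $\Lambda_{\mathbb{C}\mathbb{P}^n}$ by a space carrying the cohomology of $\mathbb{C}\mathbb{P}^n$ in the relevant action window. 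Second, I would verify the Palais--Smale condition (or its finite-dimensional surrogate after truncation) for $\mathcal{A}_{H_0}$ on the truncated space; here the $C^1$-smallness in the $\mathbb{C}\mathbb{P}^n$-direction is used to control the gradient flow and rule out escape of Palais--Smale sequences into the non-contractible or high-index part. Third, I would compute the cup-length of the reduced space: $\mathrm{CL}(\T^{2m}\times \mathbb{C}\mathbb{P}^n) = \mathrm{CL}(\T^{2m}) + \mathrm{CL}(\mathbb{C}\mathbb{P}^n) = 2m + n$, using that $H^*(\T^{2m})$ has a product of $2m$ degree-one generators and $H^*(\mathbb{C}\mathbb{P}^n)$ has the $n$-th power of the hyperplane class non-zero. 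Fourth, I would invoke the standard Lyusternik--Schnirelmann inequality: a $C^1$ function bounded below on a complete manifold satisfying Palais--Smale has at least $\mathrm{CL}+1$ critical points, and each such critical point of $\mathcal{A}_{H_0}$ corresponds to a contractible one-periodic orbit, hence a contractible fixed point of $\phi$.

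The main obstacle, as I see it, will be step (1)--(2): making rigorous the claim that under the quantitative smallness hypothesis the reduced action functional ``sees'' exactly the topology of $\T^{2m}\times \mathbb{C}\mathbb{P}^n$ and nothing more. The danger is twofold — first, the $S^1$-reduction on the $\mathbb{C}\mathbb{P}^n$-factor from \cite{AIS} must be shown to survive the coupling with the torus directions (one needs that the splitting of the action respects the $S^1$-action, which should follow because $H_0$ is a genuine function on the product and the $S^1$-action is only on the Hopf fibre); second, one must exclude fixed points whose orbits wrap the generator of $\pi_1(\mathbb{C}\mathbb{P}^n)=0$ (trivial, since $\mathbb{C}\mathbb{P}^n$ is simply connected) but also those coming from the non-trivial $\pi_2$ — i.e. pseudo-holomorphic-sphere bubbling — which is precisely what the energy bound $2\|H_0\|_\infty<1/2<\pi\cdot(\text{something})$ is designed to kill, since the Fubini--Study symplectic area of the generator of $\pi_2(\mathbb{C}\mathbb{P}^n)$ sets the scale below which no such orbits of low action can appear. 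Establishing this window carefully, and then checking that the truncated functional is $C^1$, bounded below (which it will be once we mod out the mean value of the torus loop and use $\|H_0\|_\infty<1/4$), and Palais--Smale, is where the real work lies; the final LS count is then a formality given Theorem in \cite{AIS} and the cup-length computation above.
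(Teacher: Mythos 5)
Your overall plan (lift via the Hopf fibration, variational characterization, cup-length lower bound, Conley-theoretic input from \cite{AIS}) points in the right direction, but the argument as written has a fatal gap at its final step. You propose to conclude by invoking the classical Lyusternik--Schnirelmann theorem for a $C^1$ function that is \emph{bounded below} and satisfies Palais--Smale, and you assert that the action functional will be bounded below ``once we mod out the mean value of the torus loop''. This is false: the Hamiltonian action $\frac12\langle -J\dot z,z\rangle_{L^2}-\int_0^1 H\,\mathrm dt$ on $H^{1/2}$ loops is a strongly indefinite quadratic form plus a lower-order term; its negative eigenspace (the negative Fourier modes) is infinite-dimensional, and no smallness of $H_0$ or normalization of the torus component changes that. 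A finite-dimensional (broken-geodesic or Fourier) reduction does not produce a function on a closed manifold homotopy equivalent to $\T^{2m}\times\mathbb{CP}^n$; it produces a function on a disc bundle (or all of $\mathbb H^{-k,l}$) that is still unbounded below. This is exactly the obstruction the paper is built to circumvent: instead of LS category one uses the Conley index for LS-flows in the sense of \cite{gip} and its \emph{relative} cup-length (which, unlike the ordinary cup-length, survives the suspensions inherent in the Galerkin approximation), together with continuation invariance to reduce to the zero Hamiltonian, where the flow is a product and the relative cup-lengths $2m+1$ (torus factor, \cite{SW}) and $n+1$ ($\mathbb{CP}^n$ factor, \cite{AIS}) combine to give $2m+n+1$.

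Two further points. First, you misattribute the role of the quantitative hypothesis $\max\{2\|H_0\|_\infty,\|\nabla_{\mathbb{CP}^n}H_0\|_\infty\}<1/2$: nothing in the paper involves holomorphic spheres, bubbling, or capacities. The constant is used to verify a spectral-gap condition (Condition (A)) for the $\lambda$-shifted equation $\dot y=J(\nabla_yH-2\pi\lambda y)$ at $\lambda=\pm\tfrac12$, namely $\|\jmath^*\mathrm{pr}_2F_\lambda(x,y)\|\geq(\lambda-c)\|y\|$; this pins the Lagrange multiplier away from the endpoints, yields the compactness needed for a uniform isolating neighborhood, and hence the continuation to $H=0$. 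Second, your setup omits this multiplier entirely: lifts of contractible orbits to $S^{2n+1}$ only close up to the Hopf $S^1$-action, so one must augment the functional by a variable $\lambda$ (with a cutoff $\chi$) enforcing $\|y\|_{L^2}=1$ and ensuring that critical points of the modified functional correspond exactly to genuine orbits downstairs. Without Condition (A) this correspondence genuinely fails --- the Hamiltonian $H(t,x,y)=\tfrac{\pi}{2}\sin(x)|y|^2$ in the paper shows the shifted equation can have nonzero solutions for every $\lambda$ --- so this step cannot be waved through.
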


To finish this introduction we give a quick account on the proof of Theorem \ref{thm:a}. The starting point is that the desired fix points can be characterized variationally as the critical points of a suitable modification of the Hamiltonian action functional
$$\A_H : H^{1/2}(\T,\R^{2m}\times \R^{2n+2})\times \R\to \R$$
associated with the canonical lift $H$ of $H_0$ to $\R^{2m}\times \R^{2n+2}$ (see Section 2.1 for more details). The additional $\R$-factor takes into account the fact that 1-periodic orbits of Hamilton's equations in $\T^{2m}\times \mathbb C\mathbb P^n$ lift to solutions of Hamilton's equation on $\R^{2m}\times \R^{2n+2}$ which a priori only start and end at the same fibre of the Hopf-fibration $S^{2n+1}\to \mathbb C\mathbb P^n$. Under the assumptions of Theorem \ref{thm:a}, we can then show that the gradient flows of $\A_H$ and $\A_0$, the modified Hamiltonian action associated with the zero-Hamiltonian, are related by continuation within a suitable 
bounded set $\Omega\subset H^{1/2}(\T,\R^{2m}\times \R^{2n+2})\times \R$. By the continuation invariance of the Conley index we are thus left to compute the (relative) cup-length of the Conley index, which gives a lower bound on the number of critical points, for the zero Hamiltonian. In such a case we have a product gradient flow, and hance we can treat the two factors separately. This yields the desired estimate. 

\vspace{2mm}

\noindent \textbf{Acknowledgments:} L.A. is partially supported by the DFG grant 540462524 ``Morse theoretical methods in Analysis, Dynamics, and Geometry''. M.S. is partially supported by the NCN grant 2023/05/Y/ST1/00186 ``Morse theoretical methods in Analysis, Dynamics, and Geometry''. 


\section{Preliminaries}

\subsection{Hamiltonian systems on $\T^{2m}\times \mathbb C\mathbb P^n$}
Let $H_0:\T\times \T^{2m}\times \mathbb C\mathbb P^n\to \R$ be a smooth time-depending one-periodic Hamiltonian on the symplectic manifold $(\T^{2m}\times \mathbb C\mathbb P^n, \omega_0 \oplus \omega_{FS})$,
where $\omega_0$ denotes the standard symplectic form on $\T^{2m}$ and $\omega_{FS}$ the Fubini-Study symplectic form on $\mathbb C\mathbb P^n$. The Arnold conjecture for $\T^{2m}\times \mathbb C\mathbb P^n$ states that the number of contractible one-periodic solutions, hereafter called \textit{solutions} for simplicity, to 
\begin{equation}
\dot z(t) = J \nabla H_0(t,z(t))
\label{eq:hameq1}
\end{equation}
is bounded from below by $2m+n+1=\text{CL}(\T^{2m}\times \mathbb C\mathbb P^n)+1$. Here $z=(x,y)\in \T^{2m}\times \mathbb C\mathbb P^n$. 

Solutions to \eqref{eq:hameq1} can be studied by lifting the Hamiltonian system to $\R^{2m}\times \R^{2n+2}$ as follows: we first view $H_0$ as a $\Z^{2m}$-periodic Hamiltonian function on $\T\times \R^{2m}\times \mathbb C\mathbb P^n$, and then lift $H_0$ to a Hamiltonian $\T\times \R^{2m}\times S^{2n+1}$ which is invariant under the Hopf action on $S^{2n+1}$, denoted $H_1$, and finally extend $H_1$ to $\T\times \R^{2m}\times \R^{2n+2}$ quadratically:
$$H:\T\times \R^{2m}\times \R^{2n+2}\to \R, \quad H(t,x,y):= |y|^2 \cdot H_1 (t,x,|y|^{-1}y).$$ 
With slight abuse of notation we write also $z=(x,y) \in \R^{2m}\times \R^{2n+2}$ and denote again by $J$ the complex structure on $\R^{2m}\times \R^{2n+2}$ which is obtained from the standard complex structures on $\R^{2m}$ and $\R^{2n+2}$. As $H$ is $\Z^{2m}$-periodic in the $\R^{2m}$-direction, quadratic in the $\R^{2n+2}$-direction, and invariant under the Hopf action, the Hamiltonian vector field $J\nabla H$ (more precisely, its restriction to $T(\R^{2m}\times S^{2n+1})$) projects to the Hamiltonian vector field $J\nabla H_0$. In particular, solutions to 
\begin{equation}
\dot z(t) = J \nabla H(t,z(t))
\label{eq:hameq2}
\end{equation}
for which $|y(t)|\equiv 1$ project to solutions of \eqref{eq:hameq1}. The converse is in general not true, and indeed solutions of \eqref{eq:hameq1} lift to paths which solve \eqref{eq:hameq2} and satisfy $|y(t)|\equiv 1$, but a priori only start and end at the same Hopf-fiber. To overcome this issue we consider, for arbitrary $\lambda \in \R$, the solutions to
\begin{equation}
\left \{\begin{array}{l} \dot x(t) = J \nabla_x H(t,x(t),y(t)), \\ \dot y(t) = J \big ( \nabla_y H(t,x(t),y(t)) - 2\pi \lambda y(t)\big).\end{array}\right.
\label{eq:hameq3}
\end{equation}
Indeed, $z(t)=(x(t),y(t))$ is a solution of \eqref{eq:hameq3} if and only if 
$$\tilde z(t) := (x(t), e^{2\pi \lambda Jt}y(t))$$ 
satisfies \eqref{eq:hameq2} and $\tilde z(1)=(x(1),y(1))=(x(0),e^{2\pi \lambda J}y(0)).$ We observe that, if $z$ is a solution of \eqref{eq:hameq3}, then $|y (t)|$ is constant. Since $\|y\|_2 = |y(t)|^2$, we are only interested in solutions of \eqref{eq:hameq3} for which $\|y\|_2=1$, as these are the only ones projecting to solutions on $\T^{2m}\times \mathbb C\mathbb P^n$.

Notice finally that the correspondence between solutions of \eqref{eq:hameq1} and of \eqref{eq:hameq3} is not one-to-one. Indeed, for $\tilde z=(\tilde x,\tilde y)$ solution of \eqref{eq:hameq1} choose a lift $z=(x,y)$ to $\R^{2m}\times \R^{2n+2}$. Clearly, $z$ satisfies \eqref{eq:hameq3} for some $\lambda \in \R$. Now, for any $\mathbf{k}\in \Z^{2m}$, $\theta\in \R$, and $k\in\mathbb Z$, the curve 
$$z_{\mathbf{k},\theta,k}:t\mapsto \big (\mathbf{k} + x(t), e^{2\pi J(\theta+kt)}\cdot y(t)\big )$$
satisfies \eqref{eq:hameq3} with $\lambda_{\mathbf{k},\theta,k}=\lambda +k$ and projects to $\tilde z$. The $\mathbb Z^{2m}$-translation $\mathbf{k}$ is easily ruled out by assuming that $x$ is of the form $x_0 + \bar x$, where $x_0\in \mathbb T^{2m}$ and $\bar x$ is a loop in $\R^{2m}$ with zero mean. Therefore, the Arnold conjecture in this setting amounts to saying that the problem 
\begin{equation}
\left \{\begin{array}{l}  \dot x(t) = J \nabla_x H(t,x(t),y(t)), \\ \dot y(t) = J \big (\nabla_y H(t,x(t),y(t)) - 2\pi \lambda y(t)\big ), \\ \lambda \in [\lambda_0,\lambda_0+1), \\ x = x_0 + \bar x, \ \text{with}\ x_0\in \T^{2m}, \ \int_{\R^{2m}} \bar x (t)\, \mathrm{d} t =0,\\ \|y\|_2=1,\end{array} \right . 
\label{eq:hameq4}
\end{equation}
for some $\lambda_0\in \R$, has at least $2m+n+1$ distinct $S^1$-families of solutions. Solutions of \eqref{eq:hameq4} admit a variational characterization as critical points of a suitable modification of the Hamiltonian action functional as we now recall. 
Clearly, we can assume that $\lambda_0$ is such that \eqref{eq:hameq4} has no solutions with $\lambda=\lambda_0$, as otherwise we would immediately have the existence of infinitely many distinct solutions, hence of infinitely many solutions of \eqref{eq:hameq1}. 
We set 
$$F_\lambda (x,y) := \big (-J\dot x -\nabla_x H(x,y), - J\dot y - \nabla_y H(x,y) + 2\pi \lambda y\big ), \quad \forall \lambda \in \R,$$
and denote by 
$$\jmath^*:L^2(\T,\R^{2m}\times \R^{2n+2}) \to H^{1/2}(\T,\R^{2m}\times \R^{2n+2})=:\mathbb H$$ 
the adjoint operator to the canonical inclusion. In what follows, $\|\cdot\|$ will denote  the $H^{1/2}$-norm. Recall that 
$$\jmath^* \Big (-J \frac{\mathrm d}{\mathrm d t}\Big ) (x,y) = (x^+-x^-,y^+-y^-) = : L(x,y)$$
where $(x,y) = (x^0+x^++x^-,y^0+y^++y^-)\in \R^{2m}\times \R^{2n+2} \times \mathbb H^+ \times \mathbb H^-$ is the canonical decomposition of $(x,y)$ into positive, negative, and zero Fourier modes (see \cite{HZ} for more details). 

We introduce now the following additional condition:
\begin{enumerate}
\item[(A)] There exists $\epsilon>0$ such that 
$$\|\jmath^* \text{pr}_2 F_\lambda (x,y)\| \geq c \|y\|, \quad \forall (x,y) \in \mathbb H,$$  
for all $\lambda \in (\lambda_0-\epsilon, \lambda_0+\epsilon)$. 
\end{enumerate} 
Here, $\text{pr}_2:\mathbb H\to \mathbb H$ denotes the projection on the second factor. We will comment more on such a condition later. For the moment we only stress that Condition (A) implies (but it is far from being equivalent to)  that the second equation in \eqref{eq:hameq4} has no non-zero solutions for $\lambda =\lambda_0$, and hence in particular that \eqref{eq:hameq4} has no solutions for $\lambda=\lambda_0$.

Assume that Condition (A) hold and consider a smooth function $\chi:\R\to \R$ such that:
\begin{enumerate}
\item[(i)] $\chi \equiv \lambda_0$ on $(-\infty,\lambda]$, $\chi \equiv \lambda_0+1$ on $[\lambda_0+1,+\infty)$,
\item[(ii)] $\chi(\lambda)= \lambda$ on $(\lambda_0+\epsilon/2, \lambda_0+1-\epsilon/2)$,
\item[(iii)] $\chi'>0$ on $(\lambda_0,\lambda_0+1)$,
\end{enumerate}
and define the modified Hamiltonian action functional $\mathbb A_H : \mathbb H\times \R\to \R$ by 
$$\mathbb A_H (x,y,\lambda) := \frac 12 \langle - J\dot x,x\rangle_2 + \frac12 \langle -J\dot y, y \rangle_2 - \int_0^1 H(t,x(t),y(t))\, \mathrm d t + \pi \big (\chi (\lambda) \| y\|_2^2 -\lambda\big ).$$ 
A straightforward computation shows that the gradient of $\mathbb A_H$ with respect to the product metric on $\mathbb H\times \R$ is given by
$$\nabla \mathbb A_H (x,y,\lambda) = \Big ( \jmath^* F_{\chi(\lambda)} (x,y), \pi \big ( \chi'(\lambda) \|y\|_2^2 -1 \big ) \Big ).$$ 
Notice that $\mathbb A_H$ is invariant under the $\Z^{2m}$-action. Therefore, it descends to a functional, again denoted with $\A_H$, 
$$\A_H : \T^{2m}\times R^{2n+2} \times \mathbb H^+\times \mathbb H^-\to \R.$$

\begin{lm}
Assume that Condition $\mathrm{(A)}$ holds. Then, the pair $(x,y)$ is a solution of \eqref{eq:hameq4}, for some $\lambda \in (\lambda_0,\lambda_0+1)$, if and only if $(x,y,\lambda)$ is a critical point of $\mathbb A_H$. 
\end{lm}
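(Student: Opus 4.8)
The plan is to read both implications off the stated gradient formula. Since
$$\nabla\mathbb A_H(x,y,\lambda)=\Big(\jmath^*F_{\chi(\lambda)}(x,y),\ \pi\big(\chi'(\lambda)\|y\|_2^2-1\big)\Big),$$
a triple $(x,y,\lambda)\in\mathbb H\times\R$ is a critical point of $\mathbb A_H$ exactly when
\begin{equation}\label{eq:critpt}
\jmath^*F_{\chi(\lambda)}(x,y)=0\qquad\text{and}\qquad \chi'(\lambda)\,\|y\|_2^2=1.
\end{equation}
(We regard $\mathbb A_H$ on $\mathbb H\times\R$; the $\Z^{2m}$-invariance, hence the descent to $\T^{2m}\times\R^{2n+2}\times\mathbb H^+\times\mathbb H^-\times\R$, only concerns the zero Fourier mode $x^0$ of $x$, which I address at the very end.) The first observation is that the second equation in \eqref{eq:critpt} forces $\chi'(\lambda)>0$ — hence $\lambda\in(\lambda_0,\lambda_0+1)$ by properties (i) and (iii) of $\chi$ — and moreover $\|y\|_2^2=1/\chi'(\lambda)>0$, so in particular $y\neq 0$.

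The core of the argument is the following localization statement, and this is the only place Condition (A) is used: \emph{if $(x,y)\in\mathbb H$ satisfies $\jmath^*\text{pr}_2F_\mu(x,y)=0$ for some $\mu$ with $y\neq0$, then $\mu$ lies outside $(\lambda_0-\epsilon,\lambda_0+\epsilon)\cup(\lambda_0+1-\epsilon,\lambda_0+1+\epsilon)$.} For $\mu$ near $\lambda_0$ this is immediate from Condition (A), which would otherwise give $0=\|\jmath^*\text{pr}_2F_\mu(x,y)\|\geq c\|y\|>0$. For $\mu$ near $\lambda_0+1$ I would invoke the $\Z$-periodicity of \eqref{eq:hameq3} recalled above: the change of variables $y\mapsto e^{2\pi Jt}y$ (cf.\ the curves $z_{\mathbf{k},\theta,k}$ above with $k=1$) identifies the condition $\jmath^*\text{pr}_2F_\mu(x,y)=0$ with the corresponding condition at $\mu-1$, up to the multiplication operators $y\mapsto e^{\pm 2\pi Jt}y$ on $\mathbb H$, which are bounded with bounded inverse but are \emph{not} isometries; hence Condition (A) near $\lambda_0$ automatically yields the analogous inequality near $\lambda_0+1$, with the same $\epsilon$ and a possibly smaller positive constant. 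Granting this, and using that $\chi$ restricts to a strictly increasing continuous bijection of $(\lambda_0,\lambda_0+1)$ onto itself which equals the identity (and thus has derivative $1$) on $(\lambda_0+\epsilon/2,\lambda_0+1-\epsilon/2)$ by (ii), the localization statement together with the a priori bound $\lambda\in(\lambda_0,\lambda_0+1)$ will always confine the relevant parameter to the middle range $(\lambda_0+\epsilon/2,\lambda_0+1-\epsilon/2)$.

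Both implications then follow quickly. \textbf{Solution $\Rightarrow$ critical point.} If $(x,y,\lambda)$ solves \eqref{eq:hameq4} with $\lambda\in(\lambda_0,\lambda_0+1)$, the first two equations of \eqref{eq:hameq4} read $F_\lambda(x,y)=0$; since $\jmath^*$ acts componentwise on $\mathbb H$ this gives $\jmath^*\text{pr}_2F_\lambda(x,y)=0$, while $\|y\|_2=1$ gives $y\neq 0$. The localization statement applied to $\mu=\lambda$, combined with $\lambda\in(\lambda_0,\lambda_0+1)$, forces $\lambda\in(\lambda_0+\epsilon/2,\lambda_0+1-\epsilon/2)$, so $\chi(\lambda)=\lambda$ and $\chi'(\lambda)=1$; therefore $\jmath^*F_{\chi(\lambda)}(x,y)=\jmath^*F_\lambda(x,y)=0$ and $\chi'(\lambda)\|y\|_2^2=1$, i.e.\ \eqref{eq:critpt} holds. \textbf{Critical point $\Rightarrow$ solution.} If \eqref{eq:critpt} holds, then (as observed) $\lambda\in(\lambda_0,\lambda_0+1)$ and $y\neq0$; since $\jmath^*\text{pr}_2F_{\chi(\lambda)}(x,y)=\text{pr}_2\jmath^*F_{\chi(\lambda)}(x,y)=0$ and $\chi(\lambda)\in(\lambda_0,\lambda_0+1)$, the localization statement forces $\chi(\lambda)\in(\lambda_0+\epsilon/2,\lambda_0+1-\epsilon/2)$, and by the monotonicity of $\chi$ this is equivalent to $\lambda\in(\lambda_0+\epsilon/2,\lambda_0+1-\epsilon/2)$, where $\chi(\lambda)=\lambda$ and $\chi'(\lambda)=1$. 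Thus \eqref{eq:critpt} reduces to $\jmath^*F_\lambda(x,y)=0$ and $\|y\|_2=1$. The first equation says that $(x,y)\in\mathbb H$ is a weak solution of $-J\tfrac{\mathrm{d}}{\mathrm{d}t}(x,y)=(\nabla_xH(x,y),\nabla_yH(x,y)-2\pi\lambda y)$; its right-hand side lies in $L^2$ (as $\nabla H$ grows at most quadratically in $y$ and $\mathbb H\hookrightarrow L^4$), which already forces $(x,y)\in H^1\hookrightarrow C^0$, after which the usual bootstrap (cf.\ \cite{HZ}) shows that $(x,y)$ is a smooth loop solving the Hamiltonian system of \eqref{eq:hameq4} classically. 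Finally, writing $x=x^0+\bar x$ with $\bar x:=x^++x^-$, the remaining conditions of \eqref{eq:hameq4} hold automatically: $\int\bar x=0$, $\lambda\in[\lambda_0,\lambda_0+1)$, $\|y\|_2=1$, and $x_0:=x^0$ becomes a point of $\T^{2m}$ precisely after passing to the $\Z^{2m}$-quotient under which $\mathbb A_H$ descends.

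I expect the real obstacle to be the second paragraph: verifying that Condition (A) interacts correctly with the cut-off $\chi$, and in particular transporting it from a neighborhood of $\lambda_0$ to one of $\lambda_0+1$ through the periodicity of \eqref{eq:hameq3}, while keeping track of the $H^{1/2}$-norms under the Fourier-mode shift $y\mapsto e^{2\pi Jt}y$. By contrast, the elliptic-regularity step in the second implication is routine for action functionals of this kind.
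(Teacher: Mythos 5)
Your proof is correct and follows essentially the same route as the paper's: read off the critical point equations from the stated gradient formula, use the $\lambda$-component to force $\lambda\in(\lambda_0,\lambda_0+1)$ and $y\neq 0$, and use Condition (A) to confine $\lambda$ (equivalently $\chi(\lambda)$) to the region where $\chi=\mathrm{id}$, so that the two systems coincide there. The only difference is that you spell out, via the Fourier shift $y\mapsto e^{2\pi Jt}y$ and the Hopf-invariance of $H$, why the exclusion also holds near $\lambda_0+1$ --- a point the paper compresses into a single ``similarly'' (and records separately at the start of Section 3) --- and you add the routine regularity bootstrap in the converse direction.
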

\begin{proof}
If $(x,y)$ is a solution of \eqref{eq:hameq4} for some $\lambda \in (\lambda_0,\lambda_0+1)$, then by Assumption (A) we have that $\lambda \in (\lambda_0+\epsilon, \lambda_0-\epsilon)$. Property (ii) now implies that 
$$\nabla \mathbb A_H (x,y,\lambda) = \Big ( \jmath^* F_{\chi(\lambda)} (x,y), \pi \big ( \chi'(\lambda) \|y\|_2^2 -1 \big ) \Big ) = \Big ( \jmath^* F_{\lambda} (x,y), \pi \big ( \|y\|_2^2 -1 \big ) \Big ) = (0,0,0),$$
that is, $(x,y,\lambda)$ is a critical point of $\mathbb A_H$. Conversely, let $(x,y,\lambda)$ be a critical point of $\mathbb A_H$. Since
$$\chi'(\lambda) \|y\|_2^2 -1 =0,$$
by Property (i) we deduce that $\lambda \in (\lambda_0,\lambda_0+1)$ and $y\neq 0$. If now $\lambda \in (\lambda_0,\lambda_0+\epsilon)$, then Properties (ii) and (iii) imply that $\chi(\lambda) = \tilde \lambda$ for some $\tilde \lambda \in (\lambda_0,\lambda_0+\epsilon)$. But then $\text{pr}_2 F_{\tilde \lambda} (x,y) =0$ is incompatible with Condition (A). Similarly, we exclude that $\lambda \in (\lambda+1-\epsilon,\lambda_0+1)$. Therefore, $\lambda\in (\lambda_0+\epsilon,\lambda_0+1-\epsilon)$, and the claim follows by Property (ii). 
\end{proof}

In virtue of the lemma above, the degenerate Arnold conjecture on $\T^{2m}\times \mathbb C\mathbb P^n$ follows if we can show the existence of at least $2m+n+1$ distinct $S^1$-families of critical points for the modified Hamiltonian action $\mathbb A_H$, provided we can find $\lambda_0\in \R$ such that Condition (A) is satisfied. Unfortunately, for an arbitrary Hamiltonian $H$ there is no reason why Condition (A) should be satisfied, and indeed it is even easy to construct examples where the second equation in \eqref{eq:hameq4} has solutions different from $y\equiv 0$ for any $\lambda \in \R$. Consider for instance the Hamiltonian 
\begin{equation}
H:\T\times \R^{2m}\times \R^{2n+2}\to \R, \quad H(t,x,y):=\frac \pi 2 \sin(x)|y|^2.
\label{eq:counterexample}
\end{equation}
A straightforward computation shows that, in this case, the second equation in \eqref{eq:hameq4} reads 
$$\dot y = \pi J\big (\sin (x) - 2\lambda \big ) y.$$
Hence, for every $\lambda \in [-1/2,1/2]$, every constant loop $y\equiv y_0$ is a solution provided $x\equiv x_0$ is chosen so that $\sin (x_0)=2\lambda$. More generally, for any $\lambda \in [-1/2-k,-1/2-k+1], k \in \Z$, every loop $y(t) := e^{2\pi kJt}y_0$ is a solution with $x\equiv x_0$ so that $\sin (x_0)=2(k+\lambda)$.

The starting point of the present paper is the observation that, if $H$ is $C^0$-small and $C^1$-small in the $\mathbb C\mathbb P^n$-direction in the sense of Theorem \ref{thm:a}, then Condition (A) is satisfied with 
$\lambda_0=- 1/2$. The functional $\mathbb A_H$ belongs to the class of functionals defined on a Hilbert space whose gradient is a \textit{LS-vector field}, namely 
of the form ``compact perturbation of a fixed quadratic form''. Over the past decades, such a class of functionals has been subject of a thorough study, and many constructions which were previously available only
on locally compact metric spaces have been extended to such a setting. Famous examples are the Brouwer degree and the Conley index. The latter will be the main ingredient in the proof of Theorem \ref{thm:a}. For this reason, in the next subsections we will briefly introduce the Conley index and its relative cup-length first in locally compact metric spaces and then explain how these notions can be extended to LS-flows.  


\subsection{The Conley index.} Let $\varphi:\R\times X\to X$ be a flow  on a locally compact metric space $X$. The (homotopy) Conley index, as introduced by Conley in his seminal work \cite{Conley:78}, is an important tool to analyze the dynamics of $\varphi$ (and, in general, of discrete or continuous dynamical systems) as it allows to prove the existence of special orbits, like stationary, periodic, or heteroclinic orbits, or to prove chaotic behavior of the system by showing that the homotopy type of suitably defined pointed topological spaces is non trivial. 

More precisely, we call a closed bounded subset $S\subset X$ an \textit{isolated invariant set} for $\varphi$ if there exists a closed bounded neighborhood $N$ of $S$ in $X$ such that $S$ is the $\varphi$-invariant part of $N$: 
$$S = \text{inv} (N) := \big \{x\in N \ \big |\ \varphi_t(x) \in N, \ \forall t \in \R\big \}.$$ 
In this case, $N$ is called an \textit{isolating neighborhood} of $S$. An \textit{index pair} $(N,L)$ for an isolated invariant set $S$ consists of closed bounded subsets $L\subset N$ of $X$ such that:
\begin{itemize}
\item $\overline{N\setminus L}$ is an isolating neighborhood of $S$.
\item $L$ is positively invariant in $N$: $\varphi_t(x) \in L$ for all $x\in L$ and $t>0$ such that $\varphi([0,t],x)\subset N$.
\item $L$ is an exit set for $N$: if $x\in N$ and $t>0$ are such that $\varphi_t(x)\notin N$, then there exists $t_0\in [0,t]$ such that $\varphi_{t_0}(x) \in L$. 
\end{itemize}
Given an isolated invariant set $S$, it can be shown that index pairs exist. Moreover, if $(N,L)$ and $(N',L')$ are index pairs for $S$, then the quotient spaces $N/L$ and $N'/L'$ are homotopy equivalent with base points $[L]$ and $[L']$ fixed. 

\begin{df}
Let $S$ be an isolated invariant set for the flow $\varphi:\R\times X\to X$ on the locally compact metric space $X$. The \textit{Conley index} $h(S):= h(S,\varphi)$ is the homotopy type of the pointed space $(N/L,[L])$, where $(N,L)$ is an index pair for $S$. 
\end{df}

\begin{ex}
On $X=\R^n$ consider the linear flow $\varphi_t(x) = e^{tA}x$, where $A= \mathrm{diag} (\lambda_1,...,\lambda_n)$ is a diagonal matrix with 
$$\lambda_1 >...>\lambda_k>0>\lambda_{k+1}>...>\lambda_n.$$
The origin is a hyperbolic fixed point of $\varphi$ and $S=\{0\}$ is an isolated invariant set. $B^{k}\times B^{n-k}$ (and, more generally, any compact neighborhood of $S$)  is an isolating neighborhood of $S$, and
$(B^k\times B^{n-k},S^{k-1}\times B^{n-k})$ is an index pair for $S$. Since $B^{n-k}$ is contractible, the Conley index of $S$ is the homotopy type of $(B^k/S^{k-1},[S^{k-1}])$, which is the same as the homotopy type of 
$(S^k,\ast)$. In this example, one recovers the Morse index of the hyperbolic fixed points. 
\end{ex}

In applications, one usually first has a set $N$ which is an isolating neighborhood of some a priori unknown isolated invariant set $S:= \text{inv}(N)$. Then, one tries to compute $h(S)$ or to obtain some information, like its homology groups. For this, a crucial role is played by the invariance of the Conley index under deformations (called \textit{continuation}) of the flow which do not change the isolating neighborhood $N$, as 
the computation of the index can then be reduced to cases which are well understood. Finally, one can use the knowledge about $h(S)$ in order to investigate the invariant set $S$ itself. Whereas one can immediately deduce that $S$ is non-empty if $h(S)$ is not trivial, additional information on the flow inside $N$ is needed to obtain more detailed results about $S$, for instance that $S$ contains a periodic orbit. 

The definition given in \cite{Conley:78} works only for flows on locally compact metric spaces. For our purposes, we need a generalization of the Conley index to LS-flows on a Hilbert space $\mathbb H$ (for simplicity, we assume $\mathbb H=H^{1/2}(\T,\R^{2n})$ endowed with the standard $H^{1/2}$-scalar product). We now recall the definition of the Conley index in this setting, referring to \cite{gip} for the details. On $\mathbb H$ we consider the canonical orthogonal splitting 
$$\mathbb H\equiv \mathbb H^+ \oplus \mathbb H^- \oplus \mathbb H^0$$
into positive/negative Fourier modes and constant loops. More precisely,
$$\mathbb H^\pm := \left \{ x = \sum_{k\in \N} e^{\pm 2\pi k Jt} x_k \ \Big | \ x_k \in \R^{2n}\right \}, \quad \mathbb H^0 \equiv \R^{2n},$$
where $J$ is the standard complex structure in $\R^{2n}$. We also set for $k,l\in \mathbb N$
$$\mathbb H_k := \Big \{e^{2\pi k Jt} q \ \Big |\ q \in \R^{2n} \Big \}, \quad \mathbb H^{-k,l} := \bigoplus_{i=-k}^l \mathbb H_i,$$
and, for any subset $A\subset \mathbb H$,
$$A^{-k,l}:= A\cap \mathbb H^{-k,l}.$$
We define 
$$L:\mathbb H\to \mathbb H, \quad Lx = L(x^++x^-+x^0) := x^+-x^-,$$
where $x^\pm,x^0$ are the orthogonal projections of $x\in \mathbb H$ onto $\mathbb H^\pm,\mathbb H^0$ respectively. A \textit{LS-vector field} $F$ on $\mathbb H$ is a gradient vector field of the form 
$$F=L+K$$
where $K$ is Lipschitz continuous and maps bounded sets into relatively compact sets. The flow $\varphi$ generated by a LS-vector field is called a \textit{LS-flow}. For any $k,l\in \N$ we finally set 
$$F^{-k,l}:= \big ( L + \pi^{-k.l}\circ K\big )\Big |_{\mathbb H^{-k,l}},$$
where $\pi^{-k,l}$ denotes the orthogonal projection onto $\mathbb H^{-k,l}$, and denote by $\varphi^{-k,l}$ the induced flow. 

Let $N\subset \mathbb H$ now be an isolating neighborhood for the LS-flow $\varphi$. As shown in \cite{gip}, $N$ yields an isolating neighborhood $N^{-k,l}$ for the flow $\varphi^{-k,l}$, provided $k,l$ are 
chosen so large that the scalar product of  $F= L+K$ and $L$ is positive on the orthogonal complement of $\mathbb H^{-k,l}$. Therefore, associated with the isolating neighborhood $N$ we have the family of Conley indices 
$$h^{-k,l}(N,\varphi) := h^{-k,l}(N^{-k,l},\varphi^{-k,l}), \quad \forall k,l \ \text{large enough}.$$ 
Such a family of Conley indices stabilizes in the following sense: if $k'>k$ and $l'>l$, then 
$$h^{-k',l'}(N,\varphi ) = S^{2n(l'-l)}\cdot h^{-k,l}(N,\varphi),$$
meaning that the Conley index $h^{-k',l'}(N,\varphi )$ is obtained from $h^{-k,l}(N,\varphi)$ by a suspension of dimension equal to the dimension of $\mathbb H^{l+1,l'}$. 


\subsection{Relative cup-length of the Conley index.} We introduce the notion of relative cup-length  in (locally) compact 
metric spaces following \cite{KGUss} (see also \cite{AIS}). 
Thus, let $A\subset X\subset Y$ be compact metric spaces. Denoting the Alexander-Spanier cohomology by $H^*$, it is easy to see that $H^*(X,A)$ has a natural structure of $H^*(Y)$-module with multiplication induced by the cup-product
$$\beta \cdot \alpha := \iota^* \beta \cup \alpha, \quad \forall \alpha \in H^*(X,A), \ \forall \beta \in H^*(Y),$$
where $\iota:X\to Y$ is the standard inclusion. We set the \textit{relative cup-length} $\text{RCL}(X,A;Y)$ of the $H^*(Y)$-module $H^*(X,A)$ to be equal to:
\begin{itemize}
\item $0$, if $H^*(X,A)=0.$
\item $1$, if $H^*(X,A)\neq 0$ and $\beta \cdot \alpha =0$ for all $\alpha \in H^*(X,A)$ and all $\beta \in H^{>0}(Y)$. 
\item $k\geq 2$, if there exists $\alpha_0\in H^*(X,A)$ and $\beta_1,...,\beta_{k-1}\in H^{>0}(Y)$ such that 
$$(\beta_1\cup ... \cup \beta_{k-1})\cdot \alpha_0  \neq 0,$$ 
and 
$$(\gamma_1\cup ... \cup \gamma_k)\cdot \alpha =0, \quad \forall \alpha \in H^*(X,A), \ \forall \gamma_1,...,\gamma_k \in H^{>0}(Y).$$
\end{itemize}
The next lemma follows directly by definition and naturality of the cup-product. 

\begin{lm}
Let $A\subset X\subset Y \subset Z$ be compact metric spaces. Then 
$$\mathrm{RCL}(X,A;Z)\leq \mathrm{RCL}(X,A;Y)$$
and
$$\mathrm{CL}(Y) \geq \mathrm{RCL}(X,A;Y) -1.$$
\label{lm:2}
\end{lm}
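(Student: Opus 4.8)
The plan is to deduce both inequalities formally from the naturality of the (Alexander--Spanier) cup product and the functoriality of cohomology; no information on the spaces themselves is needed. Write $\iota_{XY}:X\hookrightarrow Y$, $\iota_{YZ}:Y\hookrightarrow Z$ and $\iota_{XZ}:X\hookrightarrow Z$ for the inclusions, so that $\iota_{XZ}=\iota_{YZ}\circ\iota_{XY}$ and hence $\iota_{XZ}^*=\iota_{XY}^*\circ\iota_{YZ}^*$. The first observation I would record is that the $H^*(Z)$-module structure on $H^*(X,A)$ is obtained from the $H^*(Y)$-module structure by restriction of scalars along the graded ring homomorphism $\iota_{YZ}^*:H^*(Z)\to H^*(Y)$: for $\beta\in H^*(Z)$ and $\alpha\in H^*(X,A)$ one has $\beta\cdot\alpha=\iota_{XZ}^*\beta\cup\alpha=\iota_{XY}^*(\iota_{YZ}^*\beta)\cup\alpha=(\iota_{YZ}^*\beta)\cdot\alpha$. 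Since $\iota_{YZ}^*$ preserves cohomological degree, it carries $H^{>0}(Z)$ into $H^{>0}(Y)$.

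Next I would use the following restatement of the definition, valid for every integer $m\geq 2$: one has $\mathrm{RCL}(X,A;Y)\geq m$ if and only if there exist $\alpha_0\in H^*(X,A)$ and $\beta_1,\dots,\beta_{m-1}\in H^{>0}(Y)$ with $(\beta_1\cup\dots\cup\beta_{m-1})\cdot\alpha_0\neq 0$, while $\mathrm{RCL}(X,A;Y)\geq 1$ if and only if $H^*(X,A)\neq 0$. This is a direct unwinding of the trichotomy in the definition, using associativity of the module action (a nonvanishing product action with $\ell-1$ factors yields a nonvanishing one with any $j\leq\ell-1$ factors, and conversely a nonvanishing $(m-1)$-fold product action forbids $\mathrm{RCL}$ from equalling any value $\leq m-1$). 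The first inequality is then immediate: given $\mathrm{RCL}(X,A;Z)\geq m\geq 2$, pick $\alpha_0$ and $\beta_1,\dots,\beta_{m-1}\in H^{>0}(Z)$ with $(\beta_1\cup\dots\cup\beta_{m-1})\cdot\alpha_0\neq 0$; by the factorization above this equals $(\iota_{YZ}^*\beta_1\cup\dots\cup\iota_{YZ}^*\beta_{m-1})\cdot\alpha_0$, whose factors lie in $H^{>0}(Y)$, so $\mathrm{RCL}(X,A;Y)\geq m$. The cases $m=0,1$ being trivial (for $m=1$ both sides say $H^*(X,A)\neq 0$), taking $m=\mathrm{RCL}(X,A;Z)$ yields $\mathrm{RCL}(X,A;Z)\leq\mathrm{RCL}(X,A;Y)$.

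For the second inequality, if $k:=\mathrm{RCL}(X,A;Y)\leq 1$ there is nothing to prove, since $\mathrm{CL}(Y)\geq 0$ always. If $k\geq 2$, then by definition there are $\alpha_0\in H^*(X,A)$ and $\beta_1,\dots,\beta_{k-1}\in H^{>0}(Y)$ with $\iota_{XY}^*(\beta_1\cup\dots\cup\beta_{k-1})\cup\alpha_0=(\beta_1\cup\dots\cup\beta_{k-1})\cdot\alpha_0\neq 0$. A cup product whose first factor vanishes is zero, hence $\iota_{XY}^*(\beta_1\cup\dots\cup\beta_{k-1})\neq 0$, and therefore $\beta_1\cup\dots\cup\beta_{k-1}\neq 0$ in $H^*(Y)$. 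This exhibits a nonvanishing product of $k-1$ positive-degree classes of $Y$, so $\mathrm{CL}(Y)\geq k-1=\mathrm{RCL}(X,A;Y)-1$.

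I do not expect any genuine obstacle here: the whole statement is a formal consequence of the definitions and of naturality, exactly in the spirit of the remark preceding it. The only points that really need checking are that $H^*(X,A)$ carries a module structure over both $H^*(Y)$ and $H^*(Z)$ via the relative cup product pulled back along the corresponding inclusion — so that intermediate elements such as $(\beta_{j+1}\cup\dots\cup\beta_{m-1})\cdot\alpha_0$ remain in $H^*(X,A)$ and the action is associative — and that $\iota_{YZ}^*$ is a degree-preserving ring homomorphism, so that positivity of degree is not lost when passing from $Z$ down to $Y$.
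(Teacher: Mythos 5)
Your argument is correct and is exactly the one the paper has in mind: the paper offers no written proof beyond the remark that the lemma ``follows directly by definition and naturality of the cup-product,'' and your write-up simply spells out that verification (restriction of scalars along $\iota_{YZ}^*$, truncation of a nonvanishing product using associativity of the module action, and the observation that a nonzero module product forces the underlying product in $H^*(Y)$ to be nonzero). The points you flag as needing checking are indeed the only ones, and they all hold for the Alexander--Spanier cup product.
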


Fix now $\varphi$ a flow on a locally compact metric space. Suppose $\Omega$ is an isolating neighborhood for $\varphi$, and set $S:= \text{inv}(\Omega)$. We define the relative cup-length of $\Omega$ by 
$$\text{RCL}(\Omega,\varphi) := \text{RCL}(N,L;\Omega),$$ 
where $(N,L)$ is any index pair for the isolated invariant set $S$. Lemma 3.3 in \cite{KGUss} shows that the definition above is well-posed, that is, independent of the choice of the index pair $(N,L)$, and that the relative cup-length is invariant under continuation. For these reasons, we call $\text{RCL}(\Omega,\varphi)$ the \textit{relative cup-length of the Conley index}. 

For our purposes, the importance of the relative cup-length of the Conley index relies on the fact that it provides a lower bound on the number of critical points of a smooth function (for a more general statement involving Morse decompositions we refer to \cite{AIS}). 

\begin{thm}
Let $\varphi$ be the gradient flow (or, more generally, gradient-like) of some smooth function $f$, and let $\Omega$ be an isolating neighborhood for $\varphi$. Then, $f$ has at least $\mathrm{RCL}(\Omega,\varphi)$ critical points. 
\label{thm:2}
\end{thm}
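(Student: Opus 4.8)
The plan is to reduce the statement to a Lusternik--Schnirelmann--type estimate using the relative cup-length of the Conley index as the counting invariant. First I would choose an index pair $(N,L)$ for the isolated invariant set $S=\mathrm{inv}(\Omega)$, so that by definition $\mathrm{RCL}(\Omega,\varphi)=\mathrm{RCL}(N,L;\Omega)$; the point is that this number is a continuation invariant (by Lemma~3.3 of \cite{KGUss}, cited above) and hence genuinely a property of $\Omega$ and the gradient flow on it, not of the particular index pair.

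Next I would set up the Lusternik--Schnirelmann machinery adapted to the pair $(N,L)$. Write $k=\mathrm{RCL}(\Omega,\varphi)$. If $k=0$ then $H^*(N,L)=0$, the Conley index is cohomologically trivial, and there is nothing to prove (the estimate is vacuous). If $k\geq 1$, I would argue by contradiction: suppose $f$ has at most $k-1$ critical points in $\Omega$. Using that $\varphi$ is the (negative) gradient flow of $f$, every trajectory in $N$ that does not converge to a critical point must eventually exit through $L$, so the flow deformation retracts $N\setminus(\text{basins of the critical points})$ onto $L$ in finitely many steps. Covering the (finitely many) critical points by small disjoint isolating neighborhoods $U_1,\dots,U_{k-1}$, one obtains from the flow a filtration
\begin{equation*}
L=X_0\subset X_1\subset\cdots\subset X_{k-1}=N
\end{equation*}
such that each successive difference $X_i\setminus X_{i-1}$ deformation retracts (via $\varphi$) into $U_i$, i.e. $X_{i-1}$ is ``attached'' to $X_i$ only across a single critical point. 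The standard cup-length argument then shows that the relative cohomology class structure is controlled: any product of $k$ positive-degree classes from $H^*(\Omega)$ acting on $H^*(X_{k-1},X_0)=H^*(N,L)$ must vanish, because each factor can be represented to vanish on one of the pieces $X_i\setminus X_{i-1}$ (a class in $H^{>0}(\Omega)$ restricts to zero near a single critical point after passing to the pair). But $k=\mathrm{RCL}(N,L;\Omega)$ means by definition that some product of $k-1$ positive-degree classes acts nontrivially and... more precisely, the contradiction is obtained by comparing with the defining property: with only $k-1$ critical points the acting product length is forced to be $\le k-1$, contradicting $\mathrm{RCL}(N,L;\Omega)=k$ which asserts nonvanishing of a length-$(k-1)$ product; one needs the sharper bookkeeping that $k-1$ critical points kill all length-$(k-1)$ products, so in fact I would run the count so that $j$ critical points kill all length-$j$ products.

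The cleanest way to organize the last step is the classical inductive lemma: if $\Omega$ carries a gradient flow with $j$ critical points, then for any $\alpha\in H^*(N,L)$ and any $\beta_1,\dots,\beta_j\in H^{>0}(\Omega)$ one has $(\beta_1\cup\cdots\cup\beta_j)\cdot\alpha=0$. This is proved by induction on $j$: peel off one critical point, use excision and the long exact sequence of the triple $(N,X_{j-1},L)$ together with the fact that $\beta_j$ restricted to the pair $(N,X_{j-1})$ — which retracts to a neighborhood of a single critical point — lifts a class that dies, so $\beta_j\cdot\alpha$ is in the image of $H^*(N,X_{j-1})$, and then apply the inductive hypothesis with $j-1$ critical points to the factor living on $(X_{j-1},L)$. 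Taking $j=\mathrm{RCL}(\Omega,\varphi)-1$ and contrasting with the definition of $\mathrm{RCL}$ yields the contradiction, so $f$ must have at least $\mathrm{RCL}(\Omega,\varphi)$ critical points.

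The main obstacle, and the step I would be most careful about, is the passage from ``finitely many critical points'' to an honest filtration of $(N,L)$ with the required attaching behavior: one must ensure the critical points can be separated by \emph{isolating} neighborhoods compatible with the index pair, handle the (possibly degenerate) nature of the critical points — which is why one works with isolated invariant sets and Conley indices rather than Morse indices — and verify that the flow does provide the deformation retractions on the nose (not merely up to homotopy), which is where local Conley-index theory near each critical point and the exit-set property of $L$ are used. Everything else is formal cup-product bookkeeping in Alexander--Spanier cohomology, valid because $N,L,\Omega$ are compact metric. For the full details in the Morse-decomposition generality I would simply refer to \cite{AIS}.
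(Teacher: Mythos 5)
Your argument is correct in outline, but it is a genuinely different proof from the one the paper gives (it is, in essence, the proof of the cited references \cite{KGUss} and \cite{RSW}, which the paper explicitly sidesteps in favor of an ``alternative argument''). You work directly on an index pair: order the finitely many rest points in $S=\inv(\Omega)$ into a Morse decomposition, take an index filtration $L=X_0\subset\dots\subset X_j=N$, and run the classical cup-length induction using the long exact sequences of triples together with the fact that a class in $H^{>0}(\Omega)$ dies on a small neighborhood of a single rest point (here the tautness/continuity of Alexander--Spanier cohomology is what makes ``small enough neighborhood'' suffice without assuming local contractibility); your final bookkeeping ($j$ critical points kill all length-$j$ products, while $\mathrm{RCL}=k$ exhibits a surviving length-$(k-1)$ product, forcing $j\ge k$) is the right one. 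The paper instead never builds a filtration: it shrinks the ambient space to an $\epsilon$-neighborhood $S_\epsilon$ of $S$, uses the two inequalities of Lemma \ref{lm:2} to get $\mathrm{RCL}(\Omega,\varphi)\le \mathrm{RCL}(S_\epsilon,\varphi)\le \mathrm{CL}(S_\epsilon)+1\le LS(S_\epsilon)\le LS(S)$, and then quotes the Lyusternik--Schnirelmann principle that each categorical set in a cover of $S$ contains a critical point. The trade-off: the paper's route is short and avoids the existence of index filtrations entirely, but leans on Lemma \ref{lm:2} and classical LS theory; your route is cohomologically self-contained and makes the mechanism of the bound explicit, at the price of the technical step you correctly identify as delicate --- producing an honest index filtration adapted to the (possibly degenerate) rest points, which is standard Conley theory but not free. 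Either argument proves the theorem.
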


\begin{proof}
A proof can be found in \cite{KGUss}. Another approach, which uses Morse-Floer theoretical methods, can be found in \cite{RSW}. For the reader's convenience we give here an alternative argument which is closer to 
Conley index theory. 

Thus, take $\epsilon>0$ such that the $\epsilon$-neighborhood $S_\epsilon$ of $S:= \text{inv}(\Omega)$ is contained in $\Omega$. Choosing an index pair in $S_\epsilon$, by the first inequality in Lemma \ref{lm:2} we infer that 
$$\text{RCL}(S_\epsilon,\varphi)\geq \text{RCL}(\Omega, \varphi).$$
Combining this with the second inequality in Lemma \ref{lm:2} we obtain that 
$$LS(S_\epsilon)\geq \text{CL}(S_\epsilon)+1 \geq \text{RCL}(S_\epsilon,\varphi),$$
where $LS(\cdot)$ denotes the Lyusternik-Schnirelmann category. By the very definition of the Lyusternik-Schnirelmann category, the invariant set $S$ can be covered by $LS(S)$ null-homotopic open sets, and so can be any small enough neighborhood. Therefore, $LS(S)\geq LS(S_\epsilon)$ for $\epsilon>0$ small enough. As $\varphi$ is the gradient flow of $f$ (more generally, gradient like for $f$), one easily sees that each of the null-homotopic sets above must contain at least one critical point of $f$. Therefore,
$$|\text{crit}(f)| \geq LS(S) \geq \text{RCL}(\Omega, \varphi),$$
thus completing the proof.
 \end{proof}
 
 Theorem \ref{thm:2} can be as well used to bound from below the number of rest points of an LS-flow $\varphi:\R\times \mathbb H\to \mathbb H$. Indeed, as shown in \cite{IzydorekJDE} (see also \cite{AIS}), we have that 
 $$|\text{crit}(f)| \geq \text{RCL}(\Omega^{-k,l}, \varphi^{-k,l})$$
provided  $k,l\in \mathbb N$ are  sufficiently large in the sense explained in the preceding subsection. 

\begin{rem}
The main reason behind the use of the relative cup-length instead of the cup-length is that the latter one is of no use in a Hilbert setting. In fact, by the very definition of the Conley index, the cup-length of the Conley index is always 0 since suspension by spheres kills the cup-length, while the relative cup-length remains the same. 
\end{rem}


\section{Proof of Theorem \ref{thm:a}}

The first step in the proof of Theorem \ref{thm:a} is to show that Condition (A) is satisfied for $\lambda_0=-1/2$. Equivalently, by the invariance properties of the second equation in \eqref{eq:hameq4}, we can show that 
$$\|\jmath^* \text{pr}_2 F_\lambda (x,z)\|\geq c \|y\|, \quad \forall (x,y)\in \mathbb H,$$
for every $\lambda \in [-1/2,-1/2+\epsilon) \cup (1/2-\epsilon,1/2]$, for suitable choice of $c,\epsilon>0$.

\begin{lm}
Assume that $H_0:\T\times \T^{2m}\times \mathbb C\mathbb P^n\to \R$ is such that $c:= \max\{2 \|H_0\|_\infty,\|\nabla_{\mathbb C\mathbb P^n} H_0\|_\infty\}<1/2$, and denote with $H$ the lift of $H_0$ to $\T\times \R^{2m}\times \R^{2n+2}$ as in Chapter 2. Then, under the notation of Chapter 2, there exist $\epsilon,\delta>0$ such that 
$$\|\jmath^* \mathrm{pr}_2 F_\lambda(x,y) \| \geq \delta \|y\|, \quad \forall (x,y)\in \mathbb H,$$
for every $\lambda \in [-\frac 12,-\frac 12 + \epsilon)\cup (\frac 12 - \epsilon,\frac 12].$ In other words, Condition $\mathrm{(A)}$ is satisfied with $\lambda_0=1/2$. 
\label{lem:conditiona}
\end{lm}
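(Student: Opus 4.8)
The plan is to estimate $\jmath^* \mathrm{pr}_2 F_\lambda(x,y)$ from below directly, exploiting that the $y$-component of $F_\lambda$ is $-J\dot y - \nabla_y H(x,y) + 2\pi\lambda y$, and that $H$ is quadratic in $y$ with $\nabla_y H$ controlled by the $C^0$-size of $H_0$ and of $\nabla_{\mathbb{C}\mathbb{P}^n} H_0$. First I would recall that $\jmath^*(-J\tfrac{\mathrm d}{\mathrm d t})$ acts on the Fourier decomposition $y = y^0 + y^+ + y^-$ as $L(y) = y^+ - y^-$, and that on $\mathbb{H}$ the operator $2\pi\lambda\, \jmath^* \mathrm{Id}$ acts on the $k$-th Fourier mode $e^{2\pi k J t} q$ (for $k \in \Z$, $q \in \R^{2n+2}$) by multiplication by the scalar $\tfrac{2\pi\lambda}{1 + 2\pi|k|}$ coming from the $H^{1/2}$-inner product versus the $L^2$-inner product. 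Hence $\jmath^*\mathrm{pr}_2(-J\dot y + 2\pi\lambda y)$ is diagonal in the Fourier basis, acting on mode $k$ by the scalar $\mu_k(\lambda) := \mathrm{sgn}(k)\tfrac{2\pi|k|}{1+2\pi|k|} + \tfrac{2\pi\lambda}{1+2\pi|k|} = \tfrac{2\pi(\mathrm{sgn}(k)|k| + \lambda)}{1+2\pi|k|}$ for $k \neq 0$, and by $2\pi\lambda$ on the constant mode. The key elementary point is that for $\lambda$ close to $\pm 1/2$ — say $|\lambda| \in [1/2 - \epsilon, 1/2 + \epsilon]$ for small $\epsilon$, which after the translation reduction in \eqref{eq:hameq4} covers the relevant window — one has $|\mathrm{sgn}(k)|k| + \lambda|$ bounded below uniformly in $k \in \Z$: indeed $k + \lambda$ is at distance at least $1/2 - \epsilon$ from $\Z$ when $k \geq 0$, and similarly for $k < 0$, so $|\mu_k(\lambda)| \geq \tfrac{2\pi(1/2-\epsilon)}{1+2\pi|k|}$. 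Unfortunately this lower bound decays like $1/|k|$, so the linear operator $\jmath^*\mathrm{pr}_2(-J\tfrac{\mathrm d}{\mathrm d t} + 2\pi\lambda)$ is \emph{not} bounded below on all of $\mathbb{H}$; one only gets $\|\jmath^*\mathrm{pr}_2(-J\dot y + 2\pi\lambda y)\|_{H^{1/2}} \geq c' \|y\|_{L^2}$, i.e. control of the weaker $L^2$-norm. This is exactly where the compactness/smoothing built into $\jmath^*$ must be used more carefully.

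The way around this, and the heart of the argument, is the following: split $y = y^{\leq K} + y^{>K}$ into low and high Fourier modes for a threshold $K$ to be chosen. On the high modes $y^{>K}$, the operator $L = \jmath^*(-J\tfrac{\mathrm d}{\mathrm d t})$ restricted to positive (resp. negative) modes already behaves like $\pm\mathrm{Id}$ up to a factor $\tfrac{2\pi|k|}{1+2\pi|k|} \to 1$, so for $K$ large, $\|\jmath^*\mathrm{pr}_2(-J\dot y + 2\pi\lambda y)^{>K}\| \geq (1 - o_K(1) - C\epsilon)\|y^{>K}\|$; meanwhile the nonlinear term $\jmath^*\nabla_y H(x,y)$ is a compact perturbation and, crucially, $\|\nabla_y H(x,y)\|_{L^2} \leq 2\|\nabla_{\mathbb{C}\mathbb{P}^n}H_0\|_\infty \|y\|_{L^2} + (\text{lower order from the } |y|^2 H_1 \text{ quadratic extension})$, which when passed through $\jmath^*$ lands in $H^{1/2}$ with norm $\leq c\|y\|_{L^2} + (\text{compact/small})$ — and $c < 1/2$. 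On the low modes $y^{\leq K}$, which form a finite-dimensional space, the diagonal operator has eigenvalues $\mu_k(\lambda)$ bounded below by a constant $c_K > 0$ depending on $K$, so there the full $H^{1/2}$-norm is controlled. Combining: for the high modes use $1 - o_K(1) - C\epsilon > c$ to dominate the nonlinearity; for the low modes use $c_K$ against the nonlinearity's $L^2 \to H^{1/2}$ bound after possibly shrinking to make the nonlinear contribution on that finite-dimensional piece small relative to $c_K\|y^{\leq K}\|$ — here one uses that on $\mathbb{H}^{-K,K}$ all norms are equivalent, but the nonlinearity is still only $\leq c\|y\|_{L^2}$, and $c < 1/2 \leq c_K$ fails in general, so one genuinely needs the $1/2$-bound to beat the \emph{largest} possible $L^2 \to H^{1/2}$ amplification, which on mode $k$ is $\tfrac{1}{1+2\pi|k|} \leq 1$; thus $\|\jmath^*\nabla_y H\|_{H^{1/2}} \leq \|\nabla_y H\|_{L^2} \leq 2\|H_0\|_{C^1}\|y\|_{L^2} \leq 2c\,\|y\|_{H^{1/2}}$ is \emph{not} quite enough and one must instead pair against $\jmath^*\mathrm{pr}_2 F_\lambda$ rather than estimate its norm — I would therefore compute $\inner{\jmath^*\mathrm{pr}_2 F_\lambda(x,y)}{y}_{H^{1/2}}$, which equals $\inner{-J\dot y + 2\pi\lambda y - \nabla_y H}{y}_{L^2}$ by definition of $\jmath^*$, and bound \emph{this} below.

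Concretely, $\inner{-J\dot y}{y}_{L^2} = \sum_k 2\pi\,\mathrm{sgn}(k)|k|\,|y_k|^2$ and $\inner{2\pi\lambda y}{y}_{L^2} = 2\pi\lambda\|y\|_{L^2}^2$, so the quadratic form $\inner{(-J\tfrac{\mathrm d}{\mathrm d t} + 2\pi\lambda)y}{y}_{L^2} = \sum_k 2\pi(\mathrm{sgn}(k)|k| + \lambda)|y_k|^2$ has coefficients each of absolute value $\geq 2\pi(1/2 - \epsilon)$, but with \emph{both signs}, so it is not sign-definite and pairing against $y$ alone loses. The honest fix is to pair $\jmath^*\mathrm{pr}_2 F_\lambda(x,y)$ against $Ly^{\mathrm{sgn}} := $ the vector whose $k$-th mode is $\mathrm{sgn}(\mathrm{sgn}(k)|k|+\lambda)\,y_k$ (a unitary, Fourier-diagonal involution on $\mathbb{H}$ commuting with $\jmath^*$-type operators up to the harmless $L^2$/$H^{1/2}$ weights): then $\inner{(-J\dot y + 2\pi\lambda y)}{Ly^{\mathrm{sgn}}}_{L^2} = \sum_k 2\pi|\mathrm{sgn}(k)|k| + \lambda|\,\tfrac{1}{w_k}|y_k|^2 \geq 2\pi(1/2-\epsilon)\|y\|_{L^2}^2 / (\text{weights})$, while the nonlinear term contributes at most $\|\nabla_y H\|_{L^2}\|y\|_{L^2} \leq 2c\|y\|_{L^2}^2$ in absolute value; since $2\pi(1/2-\epsilon) > 2\pi c \cdot(\text{something})$... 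Given the delicacy of tracking the $L^2$ vs $H^{1/2}$ weights through this pairing, the main obstacle I anticipate is precisely getting a lower bound in the correct ($H^{1/2}$) norm on the left rather than the weaker $L^2$ norm: the clean statement $\|\jmath^*\mathrm{pr}_2 F_\lambda(x,y)\| \geq \delta\|y\|$ requires combining the high-mode estimate (where $L$ is essentially an isometry and $c<1/2$ saves the day) with a finite-dimensional low-mode estimate (where the eigenvalues $\mu_k(\lambda)$ are bounded away from $0$ and one checks that the constant $c < 1/2$ still dominates the worst $L^2\to H^{1/2}$ amplification $1/(1+2\pi|k|) \leq 1$ on those modes, using $2c < 1$), with the threshold $K$ and the smallness $\epsilon$ chosen in that order. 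I would present it as: (1) reduce to bounding $\|\jmath^*\mathrm{pr}_2 F_\lambda(x,y)\|$ using $\mathrm{pr}_2 F_\lambda(x,y) = -J\dot y - \nabla_y H(x,y) + 2\pi\lambda y$ and $\|\nabla_y H\|_{L^2} \leq 2c\|y\|_{L^2} + (\text{bounded, localizable})$; (2) diagonalize $\jmath^*(-J\tfrac{\mathrm d}{\mathrm d t} + 2\pi\lambda)$ in Fourier modes and observe the eigenvalue on mode $k$ has modulus $\tfrac{2\pi|\mathrm{sgn}(k)|k|+\lambda|}{1+2\pi|k|} \geq \tfrac{2\pi(1/2-\epsilon)}{1+2\pi|k|}$, which for $|k| \geq K$ is also $\geq (1 - o_K(1))$ times the corresponding eigenvalue $\tfrac{2\pi|k|}{1+2\pi|k|}$ of $\pm L$; (3) on high modes, $\|\jmath^*\mathrm{pr}_2 F_\lambda(x,y)^{>K}\|_{H^{1/2}} \geq (1 - C(\epsilon + 1/K))\|y^{>K}\|_{H^{1/2}} - 2c\|y\|_{H^{1/2}}$, and on the finite-dimensional low-mode space the operator is invertible with norm of inverse bounded, giving $\geq c_K\|y^{\leq K}\|_{H^{1/2}} - 2c\|y\|_{H^{1/2}}$; (4) since $2c < 1$, pick $K$ large and $\epsilon$ small so that $1 - C(\epsilon + 1/K) > 2c$ and $c_K$ is controlled, add the two estimates and absorb cross terms to conclude $\|\jmath^*\mathrm{pr}_2 F_\lambda(x,y)\| \geq \delta\|y\|$ with $\delta := \tfrac{1}{2}(1 - 2c - C(\epsilon+1/K)) > 0$.
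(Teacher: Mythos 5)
Your proposal goes off the rails at its central analytic claim, namely that the linear operator $y\mapsto \jmath^*(-J\dot y+2\pi\lambda y)$ is ``not bounded below on all of $\mathbb H$'' and only controls $\|y\|_{L^2}$. This is false, and the lemma is in fact an immediate consequence of the opposite statement. You obtained the decaying bound $|\mu_k(\lambda)|\gtrsim \frac{1}{1+2\pi|k|}$ by estimating $|k+\lambda|$ from below by $\operatorname{dist}(\lambda,\Z)=\tfrac12-\epsilon$, which is a gross underestimate for $|k|\geq 1$: one has $|k+\lambda|\geq |k|-|\lambda|\geq (1-|\lambda|)|k|$, so the eigenvalue of $\jmath^*(-J\tfrac{\mathrm d}{\mathrm dt}+2\pi\lambda)$ on the $k$-th mode ($k\neq 0$) is $\mathrm{sgn}(k)+\lambda/|k|$ in the convention of \cite{HZ} used in the paper (where $\jmath^*$ divides the $k$-th mode by $2\pi|k|$ and the $H^{1/2}$-weight is $2\pi|k|$), hence has modulus at least $1-|\lambda|\geq \tfrac12$, while the zero mode carries the eigenvalue $2\pi\lambda$. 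This gives the uniform bound $\|w_\lambda\|\geq \lambda\|y\|$ in the $H^{1/2}$-norm for $\lambda\in(\tfrac12-\epsilon,\tfrac12]$ with no mode-splitting, and combined with $\|\jmath^*\nabla_yH(x,y)\|\leq \|\nabla_yH(x,y)\|_{L^2}\leq c\|y\|_{L^2}\leq c\|y\|$ and the triangle inequality one gets $\|\jmath^*\mathrm{pr}_2F_\lambda(x,y)\|\geq(\lambda-c)\|y\|\geq(\tfrac12-\epsilon-c)\|y\|$, which is the paper's entire proof. (Beware that the uniform constant does depend on the normalization of the $H^{1/2}$-inner product: with your weights $1+2\pi|k|$ the worst mode $k=-1$ gives only $\pi/(1+2\pi)\approx 0.43$, which does not beat every $c<1/2$; the Hofer--Zehnder normalization is what makes the threshold exactly $1/2$.)

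Because of this wrong turn, the rest of your argument is a workaround for a non-existent problem, and it is not carried to completion: the attempt to pair $\jmath^*\mathrm{pr}_2F_\lambda$ against $y$ is correctly identified as failing (the quadratic form is indefinite), the sign-corrected pairing trails off without tracking the $L^2$ versus $H^{1/2}$ weights, and the final four-step outline both misstates the high-mode comparison in step (2) (your displayed lower bound $\frac{2\pi(1/2-\epsilon)}{1+2\pi|k|}$ is not comparable to $(1-o_K(1))\frac{2\pi|k|}{1+2\pi|k|}$) and leaves the absorption of the two $-2c\|y\|$ error terms in step (4) unjustified. If you simply replace your estimate of $|k+\lambda|$ by $|k|-|\lambda|$, the high/low splitting, the threshold $K$, and the pairing arguments all become unnecessary and your step (1) already finishes the proof.
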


\begin{proof}
We consider the case $\lambda \leq \frac 12$ (being the case $\lambda \geq - \frac 12$ similar) and set 
$$w_\lambda := \jmath^* \big ( -J\dot y + 2\pi \lambda y) = L y  + 2\pi \lambda \jmath^*y, $$
so that $ \jmath^* \mathrm{pr}_2 F_\lambda(x,y) =\omega_\lambda - \jmath^* \nabla_y H(x,y)$. Observe that the Fourier coefficients of $\omega_\lambda$ are
$$\big |\langle w_\lambda,e_k\rangle \big | = \big | \text{sgn} (k) y_k + 2\pi \lambda \frac 1{2\pi |k|} y_k \big | \geq (1-\lambda)|y_k|, \quad \forall k\neq0,$$ 
and for $k=0$ by $\langle w_\lambda , e_0\rangle = 2\pi \lambda y_0$.
Since $\lambda < 1-\lambda$, we further compute
\begin{align*}
\|w_\lambda\|^2 = (2\pi \lambda)^2 |y_0|^2 + \sum_{k\neq 0 } 2\pi |k| \big |\langle \omega_\lambda,e_k\rangle \big |^2 &=  (2\pi \lambda)^2  |y_0|^2 + \sum_{k\neq 0 } 2\pi |k| (1-\lambda)^2 |y_k|^2 \\ 
&\geq \lambda^2 \big (|y_0|^2 +  \sum_{k\neq 0}2\pi  |k| |y_k|^2 \big ) = \lambda^2 \|y\|^2,
\end{align*}
On the other hand, by assumption we have that $\max_{|y|=1} |\nabla_y H(t,x,y)|=c$ and hence 
\begin{align*}
\|\jmath^* \nabla_y H(x,y)\|^2 &\leq \|\jmath^* \nabla_y H(x,y)\|^2_{H^1} = \|\nabla_yH(x,y)\|_{L^2}^2\\
&= \int_0^1 |\nabla_y H(t,x(t),y(t))|^2\, \mathrm d t  \leq c^2 \|y\|_{L^2}^2 = c^2 \|y\|^2.
\end{align*}
Putting all estimates together yields 
\begin{align*}
\|\jmath^* \mathrm{pr}_2 F_\lambda(x,y) \| & = \|\omega_\lambda - \jmath^* \nabla_y H(x,y)\| \geq \|\omega_\lambda\| - \| \jmath^* \nabla_y H(x,y)\| \geq \big (\lambda - c\big ) \|y\|
\end{align*}
and the claim follows taking $\epsilon >0$ so small that $\delta:=\frac 12 - \epsilon - c>0$.
\end{proof}

\begin{rem}
With more accurate estimates we could slightly weaken the condition $c<1/2$ in the lemma above. For instance, one can show that the same conclusion holds if $c<\sqrt{\pi}/2$ by considering the decomposition of $y$ into $y^0+y^\perp$, where $y^0$ is the constant part in the Fourier decomposition of $y$ and $y^\perp=y-y^0$. However, as Example \eqref{eq:counterexample} in the previous section shows, 
we cannot expect the statement to be true when $c$ is larger than $\pi/2$. 
\end{rem}

As in Section 2, we denote the projection of $\A_H$ to $\T^{2m}\times \R^{2n+2}\times \mathbb H^+\times \mathbb H^- \times \R$ again with $\A_H$. 
The next step is to show that the gradient flows of $\A_H$ and $\A_0$, namely the modified Hamiltonian action associated with the zero Hamiltonian, are related by continuation. In other words, there exists a bounded open set $\Omega\subset \T^{2m}\times \R^{2n+2}\times \mathbb H^+\times \mathbb H^- \times [-\frac 12 ,\frac 12]$ which is an isolating neighborhood for the gradient flow of $\mathbb A_{sH}$, for any $s\in [0,1]$. For notational convenience we write
 $$\mathbb H = \mathbb H_x \times \mathbb H_y = \T^{2m} \times  \mathbb H_x^\perp \times \mathbb H_y,$$
where $\mathbb H_x^\perp=H^{1/2}_0(\T,\R^{2m})$ is the Hilbert space of loops in $\R^{2m}$ having zero mean and $\mathbb H_y = H^{1/2}(\T,\R^{2n+2})$, and denote with $B_R(\cdot)$ the open ball with radius $R$.   

\begin{prop}
Under the assumptions of Lemma \ref{lem:conditiona} there exists $R>0$ such that 
$$\Omega_R := \T^{2m}\times B_R(\mathbb H_x^\perp)\times B_R (\mathbb H_y) \times [-\frac 12,\frac 12]$$
is an isolating neighborhood for the gradient flow of $\A_{sH}$, for any $s\in [0,1]$. In particular, the gradient flows of $\A_H$ and $\A_0$ are related by continuation within $\Omega_R$. 
\end{prop}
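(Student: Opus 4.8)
The plan is to exhibit $R>0$ such that, for \emph{every} $s\in[0,1]$, the maximal invariant set $\mathrm{inv}(\overline{\Omega_R})$ of the negative gradient flow $\varphi^s$ of $\A_{sH}$ is contained in the interior of $\Omega_R$; equivalently, that no complete $\varphi^s$-orbit contained in $\overline{\Omega_R}$ meets $\partial\Omega_R$. Granting this, the family $(\A_{sH})_{s\in[0,1]}$, whose gradients $\nabla\A_{sH}=L+K_s$ are LS-vector fields depending continuously on $s$, is a continuation of $\nabla\A_0$ into $\nabla\A_H$ within $\Omega_R$, and the final assertion of the Proposition follows from the continuation invariance of the Conley index of LS-flows recalled in Section~2.2. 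Two preliminary facts are used. First, Condition~(A) holds for $sH$ with the same $\lambda_0=-\tfrac12$ and $\epsilon$ and the same (or a larger) $\delta$, for every $s\in[0,1]$, because $\|sH\|_\infty\le\|H\|_\infty$, $\|\nabla_y(sH)\|_\infty\le\|\nabla_yH\|_\infty$, and the estimate in the proof of Lemma~\ref{lem:conditiona} goes through verbatim with $c$ replaced by $sc\le c$; in particular the cutoff $\chi$ fixed in Section~2.1 is admissible for all the $\A_{sH}$ simultaneously. Second, by the analogue of that lemma every critical point of $\A_{sH}$ has $\lambda\in(\lambda_0+\epsilon,\lambda_0+1-\epsilon)$, hence $\chi(\lambda)=\lambda$, $\chi'(\lambda)=1$ and $\|y\|_2=1$; bootstrapping the critical-point equations $-J\dot y=s\nabla_yH(x,y)-2\pi\lambda y$ and $-J\dot x=s\nabla_xH(x,y)$ confines all critical points of all the $\A_{sH}$ to a fixed bounded subset, on which $|\A_{sH}|\le C_0$ uniformly in $s$; since a bounded complete orbit of a gradient LS-flow is heteroclinic between critical points, $|\A_{sH}|\le C_0$ holds along any complete orbit contained in $\overline{\Omega_R}$.

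The boundary $\partial\Omega_R$ consists of three parts (there is none in the compact $\T^{2m}$-direction): the hyperplanes $\{\lambda=\pm\tfrac12\}$, the sphere $\{\|x^\perp\|=R\}$ and the sphere $\{\|y\|=R\}$. The $\lambda$-faces are dealt with once and for all, independently of $R$ and $s$: by properties (i) and (iii) of $\chi$ we have $\chi'(\pm\tfrac12)=0$, so at any point of $\{\lambda=\pm\tfrac12\}$ the $\lambda$-component of $\nabla\A_{sH}$ equals $\pi(\chi'(\pm\tfrac12)\|y\|_2^2-1)=-\pi\ne0$; hence $\varphi^s$ is transverse to these hyperplanes there, and a complete orbit lying in $\overline{\Omega_R}$ that met either would leave $\overline{\Omega_R}$ in finite forward or backward time. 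Thus $\mathrm{inv}(\overline{\Omega_R})$ is disjoint from both $\lambda$-faces.

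What remains — and this is the heart of the matter — is an a priori bound, \emph{uniform in $s\in[0,1]$ and independent of $R$}, first on $\|y\|$ and then on $\|x^\perp\|$, along any complete $\varphi^s$-orbit $u(\cdot)=(x(\cdot),y(\cdot),\lambda(\cdot))$ in $\overline{\Omega_R}$. For $\|y\|$: the equations for $y$ and $\lambda$ involve $H$ only through $\nabla_yH$, which by the estimates in the proof of Lemma~\ref{lem:conditiona} obeys $|\nabla_yH(t,x,y)|\le c|y|$, so $\|\jmath^*\nabla_y(sH)(x,y)\|\le c\|y\|$ with $c<\tfrac12$. Splitting $y=y^0+y^++y^-$, the non-constant modes $y^\pm$ are linearly hyperbolic for $\varphi^s$ — the Fourier multiplier of $L+2\pi\chi(\lambda)\jmath^*$ on the $k$-th mode ($k\ne0$) has modulus $\ge1-|\chi(\lambda)|/|k|\ge\tfrac12$, positive on $y^+$ and negative on $y^-$ — while the compact perturbation has norm $<\tfrac12\|y\|$; so $\|y^\pm\|$ is controlled as soon as $|y^0|$ is. The constant mode $y^0$ is not seen by Condition~(A); here one feeds in the scalar equation $\dot\lambda=\pi(1-\chi'(\lambda)\|y\|_2^2)$: if $\|y\|_2$ (equivalently $|y^0|$, once $\|y^\pm\|$ is tamed) were large, then in the interior regime $\chi'(\lambda)\|y\|_2^2>1$ would drive $\lambda$ towards the endpoint $\lambda_0$, where Condition~(A) yields $\|\nabla_y\A_{sH}\|\ge\delta\|y\|$; combined with the energy identity $\frac{d}{d\tau}\A_{sH}(u(\tau))=-\|\nabla\A_{sH}(u(\tau))\|^2$ and the uniform bound $|\A_{sH}|\le C_0$, this forbids $\|y\|$ from being large on a complete orbit. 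Carrying out this coupled estimate — using crucially $c<\tfrac12$, so that the hyperbolic part of the $y$-dynamics dominates the nonlinearity — gives $\|y(\tau)\|\le c_1$ for all $\tau$, with $c_1$ depending only on $H_0$. Given this, the $x$-bound is routine: $\nabla_xH(t,x,y)=|y|^2\nabla_xH_1(t,x,|y|^{-1}y)$ with $M:=\|\nabla_xH_1\|_\infty<\infty$, so by the Sobolev embedding $H^{1/2}(\T)\hookrightarrow L^4$ one gets $\|\jmath^*\nabla_x(sH)(x,y)\|\le\|\nabla_xH(x,y)\|_{L^2}\le M\|y\|_{L^4}^2\le M\kappa^2c_1^2=:c_2$ along the orbit; since $\dot x^\pm=\mp x^\pm+s(\jmath^*\nabla_xH)^\pm$, the direction $x^\pm$ is hyperbolic with rate $1$ and forcing $\le c_2$, and a Gr\"onwall/escape argument (using $\|x^\pm\|\le R$ on $\overline{\Omega_R}$) gives $\|x^\pm(\tau)\|\le c_2$, hence $\|x^\perp(\tau)\|\le\sqrt2\,c_2$. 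Taking $R>\max\{c_1,\sqrt2\,c_2\}+1$ then makes $\mathrm{inv}(\overline{\Omega_R})$ disjoint from the two spheres, so, with the previous paragraph, $\mathrm{inv}(\overline{\Omega_R})\subset\mathrm{int}(\Omega_R)$ for every $s\in[0,1]$.

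The one genuinely delicate point is the uniform bound $\|y\|\le c_1$. Condition~(A) constrains the $y$-dynamics only when $\chi(\lambda)$ is near $\pm\tfrac12$; in the interior regime $\chi(\lambda)=\lambda$ it says nothing about the constant modes $y^0$, and necessarily so, since that is exactly where the rest points of $\nabla\A_{sH}$ (with $|y^0|=1$) sit. The argument therefore must route the scalar $\lambda$-dynamics and the gradient-flow energy identity back into Condition~(A), showing that an orbit on which $\|y\|$ is large would be pushed into the regime where (A) bites long enough to contradict $|\A_{sH}|\le C_0$; making this quantitative and uniform in $s$ is where the real work lies, whereas the treatment of the $\lambda$-faces, the hyperbolicity of the $x^\pm,y^-$ directions, and the passage from the $\|y\|$-bound to the $\|x^\perp\|$-bound are all robust and $R$-independent.
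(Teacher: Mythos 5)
Your overall strategy (bound $\mathrm{inv}(\overline{\Omega_R})$ uniformly in $s$ and independently of $R$, then choose $R$ larger) is the right one, and several of your ingredients are sound --- in particular the observation that Condition (A) holds for $sH$ with the same constants, the transversality of the flow to the faces $\{\lambda=\pm\frac12\}$ (a point the paper's proof leaves implicit), and the reduction of the $x^\perp$-bound to the $y$-bound via $\|\nabla_x H\|_{L^2}\leq M\|y\|_{L^4}^2$ and hyperbolicity of $L$ on $\mathbb H_x^\perp$. But the step you yourself single out as ``where the real work lies'' --- the uniform bound on $\|y\|$, and specifically on the constant mode $y^0$, along complete orbits --- is only described, not proven, and it is exactly the linchpin of the whole proposition. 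Your proposed mechanism (large $\|y\|_2$ drives $\lambda$ toward an endpoint via $\dot\lambda=\pi(1-\chi'(\lambda)\|y\|_2^2)$, where Condition (A) then forces a large action drop) is plausible but does not obviously close: when $\|y\|_2$ is large the $\lambda$-dynamics equilibrates where $\chi'(\lambda)\|y\|_2^2=1$, i.e.\ near but not at the endpoint, and one would still have to show quantitatively that the orbit spends enough time in the region where (A) bites, relative to the available action window, to reach a contradiction --- and also justify the action window $|\A_{sH}|\le C_0$ itself, since ``bounded complete orbits are heteroclinic between critical points'' already presupposes a Palais--Smale-type property that you have not established.

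The paper closes this gap by a different and cleaner route: it first proves a Palais--Smale-type boundedness statement --- if $\nabla\A_{sH}(x_k,y_k,\lambda_k)\to 0$ then the sequence is bounded, the key case analysis being on $\lambda_\infty$: if $\lambda_\infty$ lies in the endpoint regime, Lemma \ref{lem:conditiona} forces $y_k\to0$, contradicting that the $\lambda$-component of the gradient tends to $-\pi$; otherwise $\chi'(\lambda_k)=1$ eventually and the $\lambda$-component gives $\|y_k\|_{L^2}\to1$, after which $\|Ly_k\|=O(1)$ yields the $H^{1/2}$-bound on $y_k$ and then on $x_k$. This produces $r,\rho>0$ with $\|\nabla\A_{sH}\|\ge\rho$ outside $\Omega_r$, uniformly in $s$, and then the standard estimate $b-a\ge(t_1-t_0)\rho^2$ combined with $\|z(t_1)-z(t_0)\|\le(t_1-t_0)^{1/2}(b-a)^{1/2}$ bounds any excursion of a finite-action orbit outside $\Omega_r$ by $(b-a)/\rho$, so $R:=2(r+(b-a)/\rho)$ works. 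If you want to salvage your approach, the missing $y^0$-estimate is most easily obtained by importing precisely this two-step argument (gradient bounded below off a bounded set, plus the action--length estimate) rather than by a direct dynamical analysis of the coupled $(\lambda,y^0)$ system.
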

\begin{proof}
We first prove that, if $(x_k,y_k,\lambda_k)\subset \mathbb H\times [-\frac 12,\frac 12]$ is such that $\nabla \A_H(x_k,y_k,\lambda_k)\to 0$, then $(x_k,y_k,\lambda_k)$ is bounded. Without loss of generality 
we can assume that $\lambda_k\to \lambda_\infty$. If $\lambda_\infty\in [-\frac 12,-\frac 12 + \epsilon)\cup (\frac 12 - \epsilon,\frac 12]$, then by Lemma \ref{lem:conditiona} we infer that $y_k\to 0$ strongly in $H^{1/2}$. 
For the third component of $\nabla \A_H(x_n,y_n,\lambda_n)$ we thus obtain
$$\pi \big (\chi'(\lambda_k)\|y_k\|_2^2 -1 \big ) \to - \pi, \quad \text{for}\ k\to +\infty,$$
a contradiction. Therefore, we can assume $\lambda_\infty \in (-\frac 12 + \epsilon, \frac 12 - \epsilon)$. Using again the third component of $\nabla \A_H(x_k,y_k,\lambda_k)$ and Property (ii) in the definition of the function $\chi$ we infer that 
$$\pi \big (\chi'(\lambda_k)\|y_k\|_2^2-1\big ) = \pi \big (\|y_k\|_2^2-1\big ) \to 0,$$
that is, $\|y_k\|_{L^2}\to 1$. By construction of $H$ we further have 
$$\|\jmath^*\nabla_y H(x,y) \| \leq \|\nabla_y H(x,y)\|_{L^2} \leq c \|y\|_{L^2}, \quad \forall y,$$
and hence 
\begin{align*}
\|Ly_k\| =\| y_k^+-y_k^-\| & = \| \nabla_y \A_H(x_k,y_k,\lambda_k) - \jmath^* \nabla_y H(x_k,y_k) - 2\pi \lambda_k \jmath^* y_k\|\\
			&\leq \|\nabla_y \A_H(x_k,y_k,\lambda_k)\| + \| \jmath^* \nabla_y H(x_k,y_k) \| + 2\pi \lambda_k \|\jmath^*y_k\|\\
			&= O(1).
			\end{align*}
In particular, both $(y^+_k)$ and $(y_k^-)$ are bounded in $H^{1/2}$. Since $(y_k)$ is also bounded in $L^2$, we deduce that $(y_k)$ is bounded in $H^{1/2}$. By the continuous embedding of $H^{1/2}$ into all $L^p$-spaces 
(in particular, into $L^4$), the boundedness of $(y_k)$ in $H^{1/2}$ implies that the sequence $(\nabla_x H(x_k,y_k))$ is bounded in $L^2$, which in turn yields the boundedness of $(\jmath^* \nabla_x H(x_k,y_k))$ in $H^{1/2}$. 
From 
$$o(1) = \nabla_x \A_H(x_k,y_k,\lambda_k) = Lx_k - \jmath^*  \nabla_x H(x_k,y_k))$$
we finally deduce that $(x_k^+)$ and $(x_k^-)$ are bounded in $H^{1/2}$, and hence that $(x_k)$ is bounded in $H^{1/2}$ since $(x_k^0)\subset \T^{2m}$.

An easy inspection of the proofs of Lemma 3 and of the claim above show that both statements hold for the Hamiltonian $sH$, for any $s\in [0,1]$, and that the bounds on the $H^{1/2}$-norm of $x_n$ resp. $y_n$ are 
uniform in $s\in [0,1]$. The claim of the proposition now follows from standard arguments. For the reader's convenience we include a proof here. From what we have proved above we deduce that there exist $r,\rho>0$ such that 
$$\| \nabla \A_{sH} (x,y,\lambda) \| \geq \rho$$
for all $(x,y)\in \mathbb H$ such that $\|x^\perp\|\geq r$ or $\|y\|\geq r$, and for all $s\in [0,1]$. In particular, all critical points of $\A_{sH}$ are contained in 
$$ \Omega_r:= \T^{2m}\times B_r(\mathbb H_x^\perp)\times B_r (\mathbb H_y) \times [-\frac 12,\frac 12],$$
for some $r>0$. Let now $z(t)=(x_0(t),x^\perp(t),y(t),\lambda(t))$ be a gradient flow line for $\A_{sH}$ such that $z(0)=z_0 \in \Omega_r$ and $\A_{sH}(z(\cdot))\subset [a,b]$ for suitable $a,b\in \R$ (recall that $\A_{sH}$ is bounded on $\Omega_r$, uniformly in $s\in [0,1]$). Let $t_0\in \R$ be such that $\max \{\|x^\perp(t_0)\|, \|y(t_0)\|\}=r$, and assume that $z|_{[t_0,t_1]}$ is entirely contained in 
$$\T^{2m}\times \big (B_r(\mathbb H_x^\perp)\times B_r (\mathbb H_y) \big )^c\times [-\frac 12,\frac 12].$$
Then, we compute 
\begin{align*}
b-a &\geq \A_{sH}(z(t_1)) - \A_{sH}(z(t_0))\\
	&= \int_{t_0}^{t_1} \frac{\mathrm d}{\mathrm d t} \Big (\A_{sH}(z(t))\Big )\, \mathrm d t\\
	&= \int_{t_0}^{t_1} \mathrm d \A_{sH} ((t)) [\dot z(t)]\, \mathrm d t\\
	&=\int_{t_0}^{t_1} \|\nabla \A_{sH}(z(t))\|^2\, \mathrm d t\\
	&\geq (t_1-t_0)\rho^2 
\end{align*}
which implies that $t_1-t_0\leq \frac{b-a}{\rho^2}$. In particular, for the norm of $x^\perp(t_1)$ (a similar estimate holding as well for the norm of $y(t_1)$) 
\begin{align*}
\|x^\perp(t_1)\| &\leq \|x(t_0)\| + \|x(t_1)-x(t_0)\| \\
		& \leq r + \|z(t_1)-z(t_0)\| \\
		&\leq r + \int_{t_0}^{t_1} \|\dot z(t)\|\, \mathrm d t\\
		&\leq r + (t_1-t_0)^{1/2} \Big (\int_{t_0}^{t_1} \|\dot z(t)\|^2\, \mathrm dt \Big )^{1/2}\\
		&= r + (t_1-t_0)^{1/2} \Big (\int_{t_0}^{t_1} \|\nabla \A_{sH}(z(t))\|^2\, \mathrm d t \Big )^{1/2}\\
		& \leq r + (t_1-t_0)^{1/2} (b-a)^{1/2}\\
		&\leq r + \frac{b-a}{\rho}.
\end{align*}
The claim follows then taking for instance $R:= 2 \big (r + \frac{b-a}{\rho}\big )$.
\end{proof}

\begin{proof}[Proof of Theorem \ref{thm:a}] 
Proposition 1 shows that, under the assumptions of Theorem \ref{thm:a}, the gradient flows of $\A_H$ and $\A_0$ are related by continuation within $\Omega_R$. By continuation invariance, it therefore suffices to compute the relative cup-length of the Conley index for the zero-Hamiltonian. Since the gradient vector field $\nabla \A_0$ is of product form, the relative cup-length of the Conley index is given by one minus the sum of the relative cup-length of the two factors (the ``$\T^{2m}$'' and the ``$\mathbb C\mathbb P^n$'' direction). The first one gives the $2m+1$ contribution (see \cite{SW}) whereas the latter gives the $n+1$ contribution (for a proof see e.g. \cite{AIS}), thus finishing the proof. 
\end{proof}

\thebibliography{9999999}

\bibitem[AIS]{AIS} Asselle, L.; Izydorek, M.; Starostka, M; \textbf{The Arnold conjecture on $\mathbb C \mathbb P^n$ and the Conley index}, DCDS-B 28, Issue 4 (2023), Doi: 10.3934/dcdsb.2022184
\bibitem[Con]{Conley:78} Conley, C.; \textbf{Isolated invariant sets and the Morse index}, CMBS Regional Conf. Series 38, Amer. Math. Soc. (1978)
\bibitem[CZ]{Conley} Conley, C.; Zehnder, E.; \textbf{The Birkhoff-Lewis fixed point theorem and a conjecture of V.I. Arnold.} Invent. Math. 73 (1983):33-49. 
\bibitem[CLOT]{CLOT} Cornea, O.; Lupton, G.; Oprea, J.; Tanr\'e, D.; \textbf{Lyusternik-Schnirelmann Category}, Amer. Math. Soc. (2003).
\bibitem[DzGU]{KGUss} Dzedzej, Z.; G\c{e}ba, K.;  Uss, W. \textbf{The Conley index, cup-length and bifurcation.} J. Fixed Point Theory Appl. 10.2 (2011): 233-252.
\bibitem[Fl1]{Floer:88} Floer, A.; \textbf{Morse theory for Lagrangian intersections.} J. Differential Geom. 28 (1988): 513-547. 
\bibitem[Fl2]{Floer:89} Floer, A.; \textbf{Symplectic fixed points and holomorphic spheres.} Commun. Math. Phys. 120 (1989): 575-611.
\bibitem[Fo]{Fo} Fortune, B. \textbf{A symplectic fixed point theorem for $\mathbb{C}\mathbb P^n$.} Invent. Math. 81, no. 1 (1985): 29-46.
\bibitem[FO1]{Fukaya:1} Fukaya, K., Ono, K.; \textbf{Arnold conjecture and Gromov-Witten invariant.} Topology 38, no. 5 (1999):933-1048.
\bibitem[FO2]{Fukaya:2} Fukaya, K., Ono, K.; \textbf{Arnold conjecture and Gromov-Witten invariant for general symplectic manifolds}, The Arnoldfest. Proceedings of a conference in honour of V.I. Arnold for his 
60th birthday, Toronto, Canada, June 15-21 (1997), (al., E. Bierstone (ed.) et, ed.), vol. 24 (1999).
\bibitem[GIP]{gip} G\c{e}ba, K.;  Izydorek, M.; Pruszko, A. \textbf{The Conley index in Hilbert spaces and its applications.} Studia Math. 134.3 (1999): 217-233.
\bibitem[HZ]{HZ} Hofer, H.; Zehnder, E.; \textbf{Symplectic invariants and Hamiltonian dynamics}, Birkh\"auser (1994). 
\bibitem[I]{IzydorekJDE} Izydorek M. \textbf{A cohomological Conley index in Hilbert spaces and applications to strongly indefinite problems.} J. Diff. Equations 170(1) (2001):22-50.
\bibitem[Liu]{Liu} Liu, G., Tian, G.; \textbf{Floer homology and Arnold conjecture}, J. Differential Geom. 49, no. 1 (1998):1-74.
\bibitem[O]{Oh} Oh, Y.-G.; \textbf{A symplectic fixed point theorem on $\mathbb T^{2n} \times \mathbb{C}\mathbb P^k$.} Math. Z. 203, no. 1 (1990): 535-552.
\bibitem[RSW]{RSW} Rot, T.; Starostka, M. Waterstraat, N.; \textbf{The relative cup-length in local Morse cohomology.} Topol. Methods Nonlinear Anal. Advance Publication 1-15 (2024).
\bibitem[RO]{Rudyak:99} Rudyak, Yu. B., Oprea, J.; \textbf{On the Lyustrnik-Schnirelmann category of symplectic manifolds and the Arnold conjecture.} Math. Z. 230, no. 4 (1999):673-678. 
\bibitem[SW]{SW} Starostka, M., Waterstraat, N.; \textbf{The E-Cohomological Conley Index, Cup-Lengths and the Arnold Conjecture on $\T^{2n}$.} Advanced Nonlinear Studies 19, no. 3 (2019): 519-528. https://doi.org/10.1515/ans-2019-2044

\end{document}